\newtheorem{theorem}{Theorem}
\theoremstyle{plain}
\newtheorem{corollary}{Corollary}
\newtheorem{notation}{Notation}
\numberwithin{equation}{section}
\begin{document}
\title[Einstein-like doubly warped product manifolds]{Gray's decomposition
on doubly warped product manifolds and applications}
\author{Hoda K. El-Sayied}
\address{Mathematics Department, Faculty of Science, Tanata University,
Tanta, Egypt}
\email{hkelsayied1989@yahoo.com}
\author{Carlo A. Mantica}
\address[C. Mantica]{I.I.S. Lagrange, Via L. Modignani 65, 20161 Milan,
Italy,}
\email{carloalberto.mantica@libero.it}
\author{Sameh Shenawy}
\email[S. Shenawy]{drssshenawy@eng.modern-academy.edu.eg}
\address{Modern Academy for engineering and Technology, Maadi, Egypt}
\urladdr{http://www.modern-academy.edu.eg}
\author{N. Syied}
\curraddr{Modern Academy for engineering and Technology, Maadi, Egypt}
\email[Noha Syied]{drnsyied@mail.com, drnsayed@gmail.com}
\urladdr{http://www.modern-academy.edu.eg}
\subjclass[2010]{Primary 53C21; Secondary 53C50, 53C80}
\keywords{Codazzi Ricci tensor, doubly warped manifolds, Killing Ricci
tensor, doubly warped space-times, Einstein-like manifolds.}

\begin{abstract}
A. Gray presented an interesting $O\left( n\right) $ invariant decomposition
of the covariant derivative of the Ricci tensor. Manifolds whose Ricci
tensor satisfies the defining property of each orthogonal class are called
Einstein-like manifolds. In the present paper, we answered the following
question: Under what condition(s), does a factor manifold $M_{i},i=1,2$ of a
doubly warped product manifold $M=_{f_{2}}M_{1}\times _{f_{1}}M_{2}$ lie in
the same Einstein-like class of $M$? By imposing sufficient and necessary
conditions on the warping functions, an inheritance property of each class
is proved. As an application, Einstein-like doubly warped product
space-times of type $\mathcal{A},$ $\mathcal{B}$ or $\mathcal{P}$ are
considered.
\end{abstract}

\maketitle

\section{An introduction}

Alfred Gray in \cite{Gray:1978} presented $O\left( n\right) $ invariant
orthogonal irreducible decomposition of the space $W$ of all $\left(
0,3\right) $ tensors satisfying only the identities of the gradient of the
Ricci tensor $\nabla _{k}R_{ij}$. The space $W$ is decomposed into three
orthogonal irreducible subspaces, that is, $W=\mathcal{A\oplus B\oplus I}$.
This decomposition produces seven classes of Einstein-like manifolds, that
is, manifolds whose Ricci tensor satisfies the defining identity of each
subspace. They are the trivial class $\mathcal{P}$, the classes $\mathcal{A}$%
, $\mathcal{B}$, $\mathcal{I}$ and three composite classes $\mathcal{I\oplus
A}$, $\mathcal{I\oplus B}$ and $\mathcal{A\oplus B}$.

In class $\mathcal{P}$, the Ricci tensor is parallel i.e. $\nabla
_{k}R_{ij}=0$ whereas class $\mathcal{A}$ contains manifolds whose Ricci
tensor is Killing. The Ricci tensor of manifolds in class $\mathcal{B}$ is a
Codazzi tensor i.e. $\nabla _{k}R_{ij}=\nabla _{i}R_{kj}$. The traceless
part of the Ricci tensor vanishes in class $\mathcal{I}$ i.e. class $%
\mathcal{I}$\ contains Sinyukov manifolds\cite{Mantica:2019}. The tensor%
\begin{equation*}
\mathcal{L}_{ij}=R_{ij}-\frac{2R}{n+2}g_{ij}
\end{equation*}%
is Killing in class $\mathcal{I\oplus A}$ whereas the tensor%
\begin{equation*}
\mathcal{H}_{ij}=R_{ij}-\frac{R}{2\left( n-1\right) }g_{ij}
\end{equation*}%
is a Codazzi tensor in class $\mathcal{I\oplus B}$. The class $\mathcal{%
A\oplus B}$ is identified by having constant scalar curvature. The same
decomposition is discussed extensively in \cite[Chapter 16]{Besse:2008} (see
also \cite{Mantica:2012, Mantica:2019} and Section 3 for more details and
equivalent conditions). Thereafter, Einstein-like manifolds have been
studied by many authors such as G. Calvaruso in \cite%
{Calvaruso:2007,Calvaruso:2008,Calvaruso:2009,Calvaruso:2011} Mantica et al
in \cite{Mantica:2012, Mantica:2017, Mantica:2019} and many others \cite%
{Berndt:1992,Boeckx:1992,Bueken:1999,Peng:2016,Zaeim:2016}. An interesting
study in \cite{Mantica:2019} shows Einstein-like generalized
Robertson-Walker space-times are perfect fluid space-times except those in
class $\mathcal{I}$ which are not restricted. Sufficient conditions on
generalized Robertson-Walker space-times in this class to be a perfect fluid
are derived in \cite{De:2019}.

Doubly warped products is a generalization of singly warped products
introduced in \cite{Bishop:1969}. The geometric properties of doubly warped
product manifolds have been investigated by many authors such as
pseudo-convexity in \cite{Allison:1991}, harmonic Weyl conformal curvature
tensor in \cite{Gebarowski:1993}, conformal flatness in \cite%
{Gebarowski:1995,Gebarowski:1996}, geodesic completeness in \cite{Unal:2001}%
, doubly warped product submanifolds in \cite%
{Faghfouri:2015,Olteanu:2010,Olteanu:2014,Perktas:2010} and conformal vector
fields in \cite{Elsayied:2016}. Doubly warped space-times are widely used as
exact solutions of Einstein's field equations. Recently, the existence of
compact Einstein doubly warped product manifolds is considered in \cite%
{Gupta:2018}.

Inspired by the above studies of Einstein-like metrics and doubly warped
product manifolds, we studied doubly warped product manifolds equipped with
Einstein-like metrics. The inheritance properties of the Einstein-like class
type $\mathcal{P}$, $\mathcal{A},$ $\mathcal{B}$, $\mathcal{I\oplus A}$, $%
\mathcal{I\oplus B}$\ or $\mathcal{A\oplus B}$ are investigated. To assure
that factor manifolds of a doubly warped product manifold inherits the
Einstein-like class type, sufficient and necessary conditions are derived on
the warping functions. Finally, we apply the results to doubly warped
space-times.

\section{Preliminaries}

A doubly warped product manifold is the (pseudo-)Riemannian product manifold 
$M=M_{1}\times M_{2}$ of two (pseudo-)Riemannian manifolds $%
(M_{i},g_{i},D_{i}),i=1,2,$ furnished with the metric tensor%
\begin{equation*}
g=\left( f_{2}\circ \pi _{2}\right) ^{2}\pi _{1}^{\ast }\left( g_{1}\right)
\oplus \left( f_{1}\circ \pi _{1}\right) ^{2}\pi _{2}^{\ast }\left(
g_{2}\right) ,
\end{equation*}%
where the functions $f_{i}:M_{i}\rightarrow \left( 0,\infty \right) ,$ $%
i=1,2 $ are the warping functions of $M$. $M$ is denoted by $%
_{f_{2}}M_{1}\times _{f_{1}}M_{2}$. The maps $\pi _{i}:M_{1}\times
M_{2}\rightarrow M_{i}$ are the natural projections $M$ onto $M_{i}$ whereas 
$^{\ast }$ denotes the pull-back operator on tensors. In particular, if for
example $f_{2}=1$, then $M=M_{1}\times _{f_{1}}M_{2}$ is called a singly
warped product manifold (see \cite{Elsayied:2016,Unal:2001} for doubly
warped products and \cite%
{Bishop:1969,De:2019B,Elsayied:2017,Oneill:1983,Shenawy:2015,Shenawy:2016A}
for singly warped products).

\begin{notation}
Throughout this work, we use the following notations

\begin{enumerate}
\item All tensor fields on $M_{i}$ are identified with their lifts to $M$.
For example, we use $f_{i}$ for a function on $M_{i}$ and for its lift $%
\left( f_{i}\circ \pi _{i}\right) $ on $M$.

\item The manifolds $M_{i}$ has dimensions $n_{i}$ where $n=n_{1}+n_{2}$.

\item {$\mathrm{Ric}$} is the Ricci curvature tensor on $M$ and {$\mathrm{Ric%
}$}$^{i}$ is the Ricci tensor on $M^{i}$.

\item The gradient of $f_{i}$ on $M_{i}$ is denoted by $\nabla ^{i}f_{i}$
and the Laplacian by $\bigtriangleup ^{i}f_{i}$ whereas $f_{i}^{\diamond
}=f_{i}\bigtriangleup ^{i}f_{i}+\left( n_{j}-1\right) g_{i}\left( \nabla
^{i}f_{i},\nabla ^{i}f_{i}\right) ,$ $i\neq j$.

\item The indices $i$ and $j$ to denote the geometric objects of the factor
manifolds $M_{i}$ and $M_{j}$.

\item The $\left( 0,2\right) $ tensors $\mathcal{F}^{i}$ is defined as%
\begin{equation*}
\mathcal{F}^{i}\left( X_{i},Y_{i}\right) =\frac{n_{j}}{f_{i}}H^{f_{i}}\left(
X_{i},Y_{i}\right) ,
\end{equation*}%
for $X_{i},Y_{i}\in \mathfrak{X}\left( M_{i}\right) $ and $i,j=1,2,i\neq j$.
\end{enumerate}
\end{notation}

The Levi-Civita connection $D$ on $M=_{f_{2}}M_{1}\times _{f_{1}}M_{2}$ is
given by%
\begin{eqnarray*}
D_{X_{i}}X_{j} &=&X_{i}\left( \ln f_{i}\right) X_{j}+X_{j}\left( \ln
f_{j}\right) X_{i}, \\
D_{X_{i}}Y_{i} &=&D_{X_{i}}^{i}Y_{i}-\dfrac{f_{j}^{2}}{f_{i}^{2}}g_{i}\left(
X_{i},Y_{i}\right) \nabla ^{j}(\ln f_{j}),
\end{eqnarray*}%
where $i\neq j$ and $X_{i},Y_{i}\in \mathfrak{X}\left( M_{i}\right) $. Then
the Ricci curvature tensor {$\mathrm{Ric}$} on $M$ is given by%
\begin{eqnarray*}
{\mathrm{Ric}}\left( X_{i},Y_{i}\right) &=&{\mathrm{Ric}}^{i}\left(
X_{i},Y_{i}\right) -\frac{n_{j}}{f_{i}}H^{f_{i}}\left( X_{i},Y_{i}\right) -%
\frac{f_{j}^{\diamond }}{f_{i}^{2}}g_{i}\left( X_{i},Y_{i}\right) , \\
{\mathrm{Ric}}\left( X_{i},Y_{j}\right) &=&\left( n-2\right) X_{i}\left( \ln
f_{i}\right) Y_{j}\left( \ln f_{j}\right) ,
\end{eqnarray*}%
where $i\neq j$ and $X_{i},Y_{i}\in \mathfrak{X}\left( M_{i}\right) $. The
reader is referred to \cite{De:2010,Chojnacka:2013,Gebarowski:1994} for some
studies of curvature conditions on warped product manifolds.

\section{Einstein-like doubly warped product manifolds}

The Einstein-like doubly warped product manifolds $M=_{f_{2}}M_{1}\times
_{f_{1}}M_{2}$ are investigated in this section. Every subsection is devoted
to the study of a class of Einstein-like doubly warped product manifolds.
Sufficient and necessary conditions are derived on the warping functions $%
f_{i}$ for factor manifolds $M_{i}$ to acquire the same Einstein-like class
type.

\subsection{Class $\mathcal{A}$}

A doubly warped product manifold $\left( M,g\right) $ whose Ricci tensor is
Killing, that is,%
\begin{equation*}
\left( D_{X}\mathrm{Ric}\right) \left( Y,Z\right) +\left( D_{Y}\mathrm{Ric}%
\right) \left( Z,X\right) +\left( D_{Z}\mathrm{Ric}\right) \left( X,Y\right)
=0,
\end{equation*}%
for any vector fields $X,Y,Z\in \mathfrak{X}\left( M\right) $ is called
Einstein-like doubly warped product manifold of class $\mathcal{A}$. This
condition equivalent to%
\begin{equation*}
\left( D_{X}\mathrm{Ric}\right) \left( X,X\right) =0,
\end{equation*}%
for any vector field $X\in \mathfrak{X}\left( M\right) $ and the Ricci
tensor is also called cyclic parallel. The legacy of factor manifolds of $M$
in class $\mathcal{A}$ is as follows.

\begin{theorem}
\label{TH1}In a doubly warped product manifold $M=_{f_{2}}M_{1}\times
_{f_{1}}M_{2}$ where $M$ is of class type $\mathcal{A}$, a factor manifold $%
\left( M_{i},g_{i}\right) $ is an Einstein-like manifold of class $\mathcal{A%
}$ if and only if%
\begin{equation*}
\left( D_{X_{i}}^{i}\mathcal{F}^{i}\right) \left( X_{i},X_{i}\right) =\frac{2%
}{f_{i}^{3}}X_{i}\left( f_{i}\right) g_{i}\left( X_{i},X_{i}\right) \left[
f_{j}^{\diamond }+\left( n-2\right) \left( \nabla ^{j}f_{j}\right) \left(
f_{j}\right) \right] ,
\end{equation*}%
where $i,j=1,2,i\neq j$ and $X_{i}\in \mathfrak{X}\left( M_{i}\right) $.
\end{theorem}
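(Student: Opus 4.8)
The plan is to use the scalar reformulation of class $\mathcal{A}$ recorded just before the statement, namely that the Ricci tensor is cyclic parallel if and only if $\left( D_{X}\mathrm{Ric}\right) \left( X,X\right) =0$ for every $X\in \mathfrak{X}\left( M\right) $. Because this identity holds for \emph{all} $X$ once $M$ is of class $\mathcal{A}$, I may specialize it to vector fields $X=X_{i}$ tangent to the factor $M_{i}$, obtaining $\left( D_{X_{i}}\mathrm{Ric}\right) \left( X_{i},X_{i}\right) =0$. The whole argument then consists in expressing this quantity through the intrinsic geometry of the factors and reading off when it forces $M_{i}$ itself to be of class $\mathcal{A}$, i.e. when $\left( D_{X_{i}}^{i}\mathrm{Ric}^{i}\right) \left( X_{i},X_{i}\right) =0$ for all $X_{i}\in \mathfrak{X}\left( M_{i}\right) $.

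First I would expand by the product rule,
\begin{equation*}
\left( D_{X_{i}}\mathrm{Ric}\right) \left( X_{i},X_{i}\right) =X_{i}\bigl(\mathrm{Ric}\left( X_{i},X_{i}\right) \bigr)-2\,\mathrm{Ric}\left( D_{X_{i}}X_{i},X_{i}\right) ,
\end{equation*}
and substitute the Levi-Civita formula $D_{X_{i}}X_{i}=D_{X_{i}}^{i}X_{i}-\frac{f_{j}^{2}}{f_{i}^{2}}g_{i}\left( X_{i},X_{i}\right) \nabla ^{j}(\ln f_{j})$, which splits $D_{X_{i}}X_{i}$ into an $M_{i}$-tangent part $D_{X_{i}}^{i}X_{i}$ and an $M_{j}$-tangent part. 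For the first part I apply the same-factor Ricci formula (rewritten via $\mathcal{F}^{i}$ as $\mathrm{Ric}\left( X_{i},Y_{i}\right) =\mathrm{Ric}^{i}\left( X_{i},Y_{i}\right) -\mathcal{F}^{i}\left( X_{i},Y_{i}\right) -\frac{f_{j}^{\diamond }}{f_{i}^{2}}g_{i}\left( X_{i},Y_{i}\right) $), and for the second the mixed formula $\mathrm{Ric}\left( X_{i},Y_{j}\right) =\left( n-2\right) X_{i}\left( \ln f_{i}\right) Y_{j}\left( \ln f_{j}\right) $. Here I would use $X_{i}\left( \ln f_{i}\right) =\frac{1}{f_{i}}X_{i}\left( f_{i}\right) $ and $\nabla ^{j}(\ln f_{j})(\ln f_{j})=\frac{1}{f_{j}^{2}}\left( \nabla ^{j}f_{j}\right) \left( f_{j}\right) $, so that the mixed contribution collapses to a multiple of $\frac{n-2}{f_{i}^{3}}g_{i}\left( X_{i},X_{i}\right) X_{i}\left( f_{i}\right) \left( \nabla ^{j}f_{j}\right) \left( f_{j}\right) $.

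The organizing step is to recognize the intrinsic covariant derivatives: pairing $X_{i}\bigl(\mathrm{Ric}^{i}\left( X_{i},X_{i}\right) \bigr)$ with $-2\mathrm{Ric}^{i}\left( D_{X_{i}}^{i}X_{i},X_{i}\right) $ reconstitutes $\left( D_{X_{i}}^{i}\mathrm{Ric}^{i}\right) \left( X_{i},X_{i}\right) $, and likewise the $\mathcal{F}^{i}$-terms reconstitute $\left( D_{X_{i}}^{i}\mathcal{F}^{i}\right) \left( X_{i},X_{i}\right) $. I would also invoke the $D^{i}$-parallelism of $g_{i}$, i.e. $X_{i}\bigl(g_{i}\left( X_{i},X_{i}\right) \bigr)=2g_{i}\left( D_{X_{i}}^{i}X_{i},X_{i}\right) $, when differentiating the coefficient $\frac{f_{j}^{\diamond }}{f_{i}^{2}}$ (noting $f_{j}$ and $f_{j}^{\diamond }$ are constant along $M_{i}$). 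The main obstacle is purely the bookkeeping, and the decisive simplification is that the terms proportional to $g_{i}\left( D_{X_{i}}^{i}X_{i},X_{i}\right) $ arising from this coefficient cancel exactly against those produced by the second-slot expansion, leaving only a scalar multiple of $g_{i}\left( X_{i},X_{i}\right) X_{i}\left( f_{i}\right) $.

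Collecting all contributions yields the clean identity
\begin{equation*}
\left( D_{X_{i}}\mathrm{Ric}\right) \left( X_{i},X_{i}\right) =\left( D_{X_{i}}^{i}\mathrm{Ric}^{i}\right) \left( X_{i},X_{i}\right) -\left( D_{X_{i}}^{i}\mathcal{F}^{i}\right) \left( X_{i},X_{i}\right) +\frac{2}{f_{i}^{3}}X_{i}\left( f_{i}\right) g_{i}\left( X_{i},X_{i}\right) \bigl[f_{j}^{\diamond }+\left( n-2\right) \left( \nabla ^{j}f_{j}\right) \left( f_{j}\right) \bigr].
\end{equation*}
Since $M$ is of class $\mathcal{A}$ the left-hand side vanishes for every $X_{i}$, so $\left( D_{X_{i}}^{i}\mathrm{Ric}^{i}\right) \left( X_{i},X_{i}\right) =0$ holds for all $X_{i}$ — that is, $\left( M_{i},g_{i}\right) $ is of class $\mathcal{A}$ — precisely when the two remaining terms cancel, which is exactly the stated condition. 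Both implications of the ``if and only if'' are immediate from this single equivalence.
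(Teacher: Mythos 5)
Your proposal is correct and takes essentially the same route as the paper's own proof: you specialize the scalar form $\left( D_{X}\mathrm{Ric}\right) \left( X,X\right) =0$ to lifted fields $X_{i}$, expand via the doubly warped connection and Ricci formulas, and arrive at exactly the paper's key identity $\left( D_{X_{i}}\mathrm{Ric}\right) \left( X_{i},X_{i}\right) =\left( D_{X_{i}}^{i}\mathrm{Ric}^{i}\right) \left( X_{i},X_{i}\right) -\left( D_{X_{i}}^{i}\mathcal{F}^{i}\right) \left( X_{i},X_{i}\right) +\frac{2}{f_{i}^{3}}X_{i}\left( f_{i}\right) g_{i}\left( X_{i},X_{i}\right) \left[ f_{j}^{\diamond }+\left( n-2\right) \left( \nabla ^{j}f_{j}\right) \left( f_{j}\right) \right] $, from which both directions of the equivalence follow at once. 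Your bookkeeping (the collapse of the mixed Ricci term via $X_{i}\left( \ln f_{i}\right) =\frac{1}{f_{i}}X_{i}\left( f_{i}\right) $ and the cancellation of the $g_{i}\left( D_{X_{i}}^{i}X_{i},X_{i}\right) $ terms by metric compatibility) checks out and matches the paper's computation.
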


\begin{proof}
In a doubly warped product manifold $M=_{f_{2}}M_{1}\times _{f_{1}}M_{2}$ of
class $\mathcal{A}$, it is%
\begin{eqnarray*}
0 &=&\left( D_{X}\mathrm{Ric}\right) \left( X,X\right) \\
&=&X\left( \mathrm{Ric}\left( X,X\right) \right) -2\mathrm{Ric}\left(
D_{X}X,X\right) .
\end{eqnarray*}%
Thus, for a the special case where $X=X_{i}$ lands on one factor, one may get%
\begin{eqnarray*}
0 &=&\left( D_{X_{i}}\mathrm{Ric}\right) \left( X_{i},X_{i}\right) \\
&=&X_{i}\left( {\mathrm{Ric}}^{i}\left( X_{i},X_{i}\right) -\mathcal{F}%
^{i}\left( X_{i},X_{i}\right) -\frac{f_{j}^{\diamond }}{f_{i}^{2}}%
g_{i}\left( X_{i},X_{i}\right) \right) \\
&&-2\mathrm{Ric}^{i}\left( D_{X_{i}}^{i}X_{i},X_{i}\right) +2\mathcal{F}%
^{i}\left( D_{X_{i}}^{i}X_{i},X_{i}\right) +2\frac{f_{j}^{\diamond }}{%
f_{i}^{2}}g_{i}\left( D_{X_{i}}^{i}X_{i},X_{i}\right) \\
&&+2\left( n-2\right) \dfrac{1}{f_{i}^{3}}\left( \nabla ^{j}f_{j}\right)
\left( f_{j}\right) X_{i}\left( f_{i}\right) g_{i}\left( X_{i},X_{i}\right) .
\end{eqnarray*}%
Thus, after lengthy computations, it is%
\begin{eqnarray*}
0 &=&\left( D_{X_{i}}^{i}{\mathrm{Ric}}^{i}\right) \left( X_{i},X_{i}\right)
-\left( D_{X_{i}}^{i}\mathcal{F}^{i}\right) \left( X_{i},X_{i}\right) \\
&&+\frac{2}{f_{i}^{3}}X_{i}\left( f_{i}\right) g_{i}\left(
X_{i},X_{i}\right) \left[ f_{j}^{\diamond }+\left( n-2\right) \left( \nabla
^{j}f_{j}\right) \left( f_{j}\right) \right] .
\end{eqnarray*}%
These equations complete the proof.
\end{proof}

It is now easy to recover a similar result on singly warped product
manifolds.

\begin{corollary}
In a singly warped product manifold $M=M_{1}\times _{f_{1}}M_{2}$ where $M$
is of class type $\mathcal{A}$, $\left( M_{1},g_{1}\right) $ is an
Einstein-like manifold of class $\mathcal{A}$ if and only if $\mathcal{F}%
^{i} $ is Killing. In addition, $\left( M_{2},g_{2}\right) $ is of class
type $\mathcal{A}$.
\end{corollary}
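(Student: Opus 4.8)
The plan is to specialize Theorem~\ref{TH1} to the singly warped case, where we set $f_2 = 1$. First I would observe that the two factors play genuinely different roles here: because $f_2 \equiv 1$, the function $f_2$ is constant, so $\nabla^2 f_2 = 0$ and $\triangle^2 f_2 = 0$, which forces $f_2^{\diamond} = f_2 \triangle^2 f_2 + (n_1 - 1)g_2(\nabla^2 f_2, \nabla^2 f_2) = 0$. I would substitute these vanishings into the two instances of the identity from Theorem~\ref{TH1}, handling the index assignment $(i,j)=(1,2)$ and $(i,j)=(2,1)$ separately, since the asymmetry between the warping functions makes the two resulting equations qualitatively distinct.

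For the factor $M_1$ (so $i=1$, $j=2$), the right-hand side of the Theorem~\ref{TH1} identity contains the bracket $f_2^{\diamond} + (n-2)(\nabla^2 f_2)(f_2)$. Both terms vanish: $f_2^{\diamond} = 0$ as computed above, and $(\nabla^2 f_2)(f_2) = g_2(\nabla^2 f_2, \nabla^2 f_2) = 0$ since $\nabla^2 f_2 = 0$. Hence the entire right-hand side is zero, and the condition of Theorem~\ref{TH1} collapses to $(D^1_{X_1}\mathcal{F}^1)(X_1,X_1) = 0$ for all $X_1$, which is precisely the statement that $\mathcal{F}^1$ is cyclic parallel, i.e. Killing. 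This gives the stated equivalence for $M_1$.

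For the factor $M_2$ (so $i=2$, $j=1$), I would use the Ricci formula from the Preliminaries together with $f_2 = 1$ to read off the second claim directly. With $f_2 \equiv 1$ one has $\mathcal{F}^2(X_2,Y_2) = \tfrac{n_1}{f_2}H^{f_2}(X_2,Y_2) = 0$, since the Hessian of a constant function vanishes. Substituting $i=2$, $j=1$ into the Theorem~\ref{TH1} identity, the left-hand side $(D^2_{X_2}\mathcal{F}^2)(X_2,X_2)$ is identically zero, so the identity reduces to a constraint on the right-hand side alone. The cleanest route, however, is to note that the equivalence in Theorem~\ref{TH1} already asserts that $M_2$ being class $\mathcal{A}$ is \emph{equivalent} to that identity holding; since the left side vanishes identically and the surviving right-hand terms can be shown to vanish as well (using that the $f_1^{\diamond}$ and $(\nabla^1 f_1)(f_1)$ contributions are absorbed by the ambient class-$\mathcal{A}$ hypothesis on $M$), the condition is automatically satisfied, so $M_2$ is unconditionally of class $\mathcal{A}$.

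The main obstacle I anticipate is the $M_2$ direction: one must verify carefully that the right-hand bracket for $i=2$ does not impose a hidden nontrivial condition. I would discharge this by appealing to the fact that for a singly warped product the fiber $M_2$ carries no warping of its own, so its intrinsic Ricci tensor and its lift agree up to the conformal factor already accounted for, and the class-$\mathcal{A}$ property of $M$ then descends automatically to $M_2$ without any further requirement on $f_1$. The $M_1$ direction, by contrast, is a clean specialization and should follow purely by substituting the vanishing data into the displayed identity of Theorem~\ref{TH1}.
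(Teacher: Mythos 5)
Your overall strategy --- specializing Theorem \ref{TH1} to $f_{2}\equiv 1$ and treating the index assignments $(i,j)=(1,2)$ and $(i,j)=(2,1)$ separately --- is exactly what the paper intends (it gives no separate proof, calling the corollary ``easy to recover''), and your $M_{1}$ half is correct: with $f_{2}$ constant one has $f_{2}^{\diamond}=0$ and $\left(\nabla^{2}f_{2}\right)\left(f_{2}\right)=0$, so the bracket on the right-hand side vanishes and the criterion collapses to $\left(D_{X_{1}}^{1}\mathcal{F}^{1}\right)\left(X_{1},X_{1}\right)=0$, i.e.\ $\mathcal{F}^{1}$ is Killing.

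Your $M_{2}$ half, however, reaches the right conclusion by an invalid step. For $i=2$, $j=1$ the right-hand side of the Theorem \ref{TH1} identity is
\[
\frac{2}{f_{2}^{3}}X_{2}\left(f_{2}\right)g_{2}\left(X_{2},X_{2}\right)\left[f_{1}^{\diamond}+\left(n-2\right)\left(\nabla^{1}f_{1}\right)\left(f_{1}\right)\right],
\]
and it vanishes for the trivial reason that $X_{2}\left(f_{2}\right)=X_{2}\left(1\right)=0$; the bracket $f_{1}^{\diamond}+\left(n-2\right)\left(\nabla^{1}f_{1}\right)\left(f_{1}\right)$ does \emph{not} vanish in general and is not ``absorbed by the ambient class-$\mathcal{A}$ hypothesis.'' That appeal is circular: Theorem \ref{TH1} is stated under the hypothesis that $M$ is of class $\mathcal{A}$, so that hypothesis has already been spent in deriving the equivalence and cannot be invoked a second time to cancel residual terms; nor does any descent argument about the intrinsic Ricci tensor of the fiber enter. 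Once you note $X_{2}\left(f_{2}\right)\equiv 0$ (together with $\mathcal{F}^{2}=\frac{n_{1}}{f_{2}}H^{f_{2}}=0$, which you did observe), both sides of the criterion are identically zero, the equivalence holds vacuously, and $M_{2}$ is unconditionally of class $\mathcal{A}$. The repair is one line, but as written the justification for the key vanishing would not stand.
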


\subsection{Class $\mathcal{B}$}

Let $M$ be as Einstein-like doubly warped product manifold of class $%
\mathcal{B}$. Then, the Ricci tensor is a Codazzi tensor,that is,%
\begin{equation*}
\left( D_{X}\mathrm{Ric}\right) \left( Y,Z\right) =\left( D_{Y}\mathrm{Ric}%
\right) \left( X,Z\right) .
\end{equation*}%
The above condition is equivalent to:

\begin{enumerate}
\item $M$ has a harmonic Riemann tensor, that is, $\nabla _{\varepsilon }%
\mathcal{R}_{\alpha \beta \gamma }^{\text{ \ \ }\varepsilon }=0$, or

\item $M$ admits a harmonic Weyl conformal tensor and the scalar curvature
is constant, that is, $\nabla _{\varepsilon }\mathcal{C}_{\alpha \beta
\gamma }^{\text{ \ \ }\varepsilon }=0$ and $\nabla _{\varepsilon }R=0$.
\end{enumerate}

The base manifold and the fiber manifold gain the Einstein-like class type $%
\mathcal{B}$ according to.

\begin{theorem}
\label{TH2}In a doubly warped product manifold $M=_{f_{2}}M_{1}\times
_{f_{1}}M_{2}$ where $M$ is of class type $\mathcal{B}$, the factor manifold 
$\left( M_{i},g_{i}\right) $ is an Einstein-like manifold of class $\mathcal{%
B}$ if and only if%
\begin{eqnarray*}
\left( D_{X_{i}}^{i}\mathcal{F}^{i}\right) \left( Y_{i},Z_{i}\right)
&=&\left( D_{Y_{i}}^{i}\mathcal{F}^{i}\right) \left( X_{i},Z_{i}\right) \\
&&+\frac{1}{f_{i}^{3}}X_{i}\left( f_{i}\right) g_{i}\left(
Y_{i},Z_{i}\right) \left( 2f_{j}^{\diamond }-\left( n-2\right) \left( \nabla
^{j}f_{j}\right) f_{j}\right) \\
&&-\frac{1}{f_{i}^{3}}Y_{i}\left( f_{i}\right) g_{i}\left(
X_{i},Z_{i}\right) \left( 2f_{j}^{\diamond }-\left( n-2\right) \left( \nabla
^{j}f_{j}\right) f_{j}\right) ,
\end{eqnarray*}%
where $i,j=1,2,i\neq j$ and $X_{i},Y_{i},Z_{i}\in \mathfrak{X}\left(
M_{i}\right) $.
\end{theorem}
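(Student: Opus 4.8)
The plan is to mirror the proof of Theorem \ref{TH1}, but now carrying three independent vector fields $X_{i},Y_{i},Z_{i}\in \mathfrak{X}(M_{i})$ through the Codazzi identity instead of a single repeated argument. First I would restrict the defining identity of class $\mathcal{B}$ to the factor $M_{i}$, so that the hypothesis reads $(D_{X_{i}}\mathrm{Ric})(Y_{i},Z_{i})=(D_{Y_{i}}\mathrm{Ric})(X_{i},Z_{i})$, and expand the left-hand side as $X_{i}(\mathrm{Ric}(Y_{i},Z_{i}))-\mathrm{Ric}(D_{X_{i}}Y_{i},Z_{i})-\mathrm{Ric}(Y_{i},D_{X_{i}}Z_{i})$.

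Next I would substitute the structural formulas from Section 2: the Ricci expression $\mathrm{Ric}(X_{i},Y_{i})=\mathrm{Ric}^{i}(X_{i},Y_{i})-\mathcal{F}^{i}(X_{i},Y_{i})-\frac{f_{j}^{\diamond }}{f_{i}^{2}}g_{i}(X_{i},Y_{i})$ for same-factor arguments, the mixed value $\mathrm{Ric}(X_{i},Y_{j})=(n-2)X_{i}(\ln f_{i})Y_{j}(\ln f_{j})$, and the Levi-Civita formula $D_{X_{i}}Y_{i}=D_{X_{i}}^{i}Y_{i}-\frac{f_{j}^{2}}{f_{i}^{2}}g_{i}(X_{i},Y_{i})\nabla ^{j}(\ln f_{j})$. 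The terms should then reassemble into three groups: the purely intrinsic pieces collapse into $(D_{X_{i}}^{i}\mathrm{Ric}^{i})(Y_{i},Z_{i})$ and $-(D_{X_{i}}^{i}\mathcal{F}^{i})(Y_{i},Z_{i})$; the term $\frac{f_{j}^{\diamond }}{f_{i}^{2}}g_{i}$ contributes $\frac{2f_{j}^{\diamond }}{f_{i}^{3}}X_{i}(f_{i})g_{i}(Y_{i},Z_{i})$, once I use that $f_{j}^{\diamond }$ is pulled back from $M_{j}$ and is therefore annihilated by $X_{i}$; and the $M_{j}$-components of $D_{X_{i}}Y_{i}$ and $D_{X_{i}}Z_{i}$, paired with $Z_{i}$ and $Y_{i}$ through the mixed Ricci value, produce the factor $\frac{n-2}{f_{i}^{3}}(\nabla ^{j}f_{j})(f_{j})[\,g_{i}(X_{i},Y_{i})Z_{i}(f_{i})+g_{i}(X_{i},Z_{i})Y_{i}(f_{i})\,]$, where I have used $(\nabla ^{j}\ln f_{j})(\ln f_{j})=\frac{1}{f_{j}^{2}}(\nabla ^{j}f_{j})(f_{j})$.

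I would then write the analogous expansion of $(D_{Y_{i}}\mathrm{Ric})(X_{i},Z_{i})$ by interchanging $X_{i}$ and $Y_{i}$, and subtract. The symmetric combination $g_{i}(X_{i},Y_{i})Z_{i}(f_{i})$ appears identically in both expansions and cancels; what survives from the mixed terms is exactly the antisymmetric pair $\frac{n-2}{f_{i}^{3}}(\nabla ^{j}f_{j})(f_{j})[\,Y_{i}(f_{i})g_{i}(X_{i},Z_{i})-X_{i}(f_{i})g_{i}(Y_{i},Z_{i})\,]$. Imposing that $M_{i}$ itself is of class $\mathcal{B}$, i.e. that $\mathrm{Ric}^{i}$ is Codazzi so $(D_{X_{i}}^{i}\mathrm{Ric}^{i})(Y_{i},Z_{i})=(D_{Y_{i}}^{i}\mathrm{Ric}^{i})(X_{i},Z_{i})$, cancels the intrinsic Ricci terms, and solving for $(D_{X_{i}}^{i}\mathcal{F}^{i})(Y_{i},Z_{i})$ collects the $f_{j}^{\diamond }$ and $(n-2)(\nabla ^{j}f_{j})f_{j}$ contributions into the common coefficient $2f_{j}^{\diamond }-(n-2)(\nabla ^{j}f_{j})f_{j}$, yielding the stated identity. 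Since every step is a reversible algebraic manipulation once $M$ is assumed of class $\mathcal{B}$, the same computation delivers both the ``if'' and ``only if'' directions.

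I expect the main obstacle to be bookkeeping rather than conceptual: correctly distinguishing which mixed term is symmetric under $X_{i}\leftrightarrow Y_{i}$ (and hence drops out) from the antisymmetric part (which survives), and verifying that the warping-function factors combine into the single bracket $2f_{j}^{\diamond }-(n-2)(\nabla ^{j}f_{j})f_{j}$. It is worth noting that the coefficient here differs from the class $\mathcal{A}$ result of Theorem \ref{TH1}, precisely because the antisymmetrization of the Codazzi condition retains a different linear combination of the mixed terms than the full cyclic symmetrization of the Killing condition does.
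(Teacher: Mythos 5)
Your proposal is correct and follows essentially the same route as the paper: you derive the same expansion of $\left( D_{X_{i}}\mathrm{Ric}\right) \left( Y_{i},Z_{i}\right)$ that the paper records as its key equation, antisymmetrize in $X_{i}\leftrightarrow Y_{i}$ (the paper packages this as the vanishing of a deviation tensor $B$), correctly identify that the symmetric mixed term $g_{i}\left( X_{i},Y_{i}\right) Z_{i}\left( f_{i}\right)$ cancels while the antisymmetric pair survives, and obtain the stated bracket $2f_{j}^{\diamond }-\left( n-2\right) \left( \nabla ^{j}f_{j}\right) f_{j}$. Your reversibility remark justifying the ``if and only if'' is also exactly how the paper closes its argument.
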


\begin{proof}
Let us define the deviation tensor $B\left( X,Y,Z\right) $ as follows%
\begin{equation*}
B\left( X,Y\right) Z=\left( D_{X}\mathrm{Ric}\right) \left( Y,Z\right)
-\left( D_{Y}\mathrm{Ric}\right) \left( X,Z\right) .
\end{equation*}%
There are three different cases. Let us consider the first case, that is,%
\begin{equation}
B\left( X_{i},Y_{i},Z_{i}\right) =\left( D_{X_{i}}\mathrm{Ric}\right) \left(
Y_{i},Z_{i}\right) -\left( D_{Y_{i}}\mathrm{Ric}\right) \left(
X_{i},Z_{i}\right) .  \label{L1}
\end{equation}%
It is enough to find $\left( D_{X_{i}}\mathrm{Ric}\right) \left(
Y_{i},Z_{i}\right) $ as%
\begin{eqnarray*}
\left( D_{X_{i}}\mathrm{Ric}\right) \left( Y_{i},Z_{i}\right) &=&X_{i}\left( 
{\mathrm{Ric}}^{i}\left( Y_{i},Z_{i}\right) \right) -X_{i}\left( \mathcal{F}%
^{i}\left( Y_{i},Z_{i}\right) \right) -f_{j}^{\diamond }X_{i}\left( \frac{1}{%
f_{i}^{2}}g_{i}\left( Y_{i},Z_{i}\right) \right) \\
&&-{\mathrm{Ric}}^{i}\left( D_{X_{i}}^{i}Y_{i},Z_{i}\right) +\mathcal{F}%
^{i}\left( D_{X_{i}}^{i}Y_{i},Z_{i}\right) +\frac{f_{j}^{\diamond }}{%
f_{i}^{2}}g_{i}\left( D_{X_{i}}^{i}Y_{i},Z_{i}\right) \\
&&-{\mathrm{Ric}}^{i}\left( Y_{i},D_{X_{i}}^{i}Z_{i}\right) +\mathcal{F}%
^{i}\left( Y_{i},D_{X_{i}}^{i}Z_{i}\right) +\frac{f_{j}^{\diamond }}{%
f_{i}^{2}}g_{i}\left( Y_{i},D_{X_{i}}^{i}Z_{i}\right) \\
&&+\left( n-2\right) \dfrac{1}{f_{i}^{3}}g_{i}\left( X_{i},Y_{i}\right)
\nabla ^{j}f_{j}\left( f_{j}\right) Z_{i}\left( f_{i}\right) \\
&&+\left( n-2\right) \dfrac{1}{f_{i}^{3}}g_{i}\left( X_{i},Z_{i}\right)
\nabla ^{j}f_{j}\left( f_{j}\right) Y_{i}\left( f_{i}\right) .
\end{eqnarray*}%
Simplifying this expression, it is%
\begin{eqnarray}
\left( D_{X_{i}}\mathrm{Ric}\right) \left( Y_{i},Z_{i}\right) &=&\left(
D_{X_{i}}^{i}\mathrm{Ric}^{i}\right) \left( Y_{i},Z_{i}\right) -\left(
D_{X_{i}}^{i}\mathcal{F}^{i}\right) \left( Y_{i},Z_{i}\right) +2\frac{%
f_{j}^{\diamond }}{f_{i}^{3}}X_{i}\left( f_{i}\right) g_{i}\left(
Y_{i},Z_{i}\right)  \notag \\
&&+\left( n-2\right) \dfrac{1}{f_{i}^{3}}g_{i}\left( X_{i},Y_{i}\right)
\nabla ^{j}f_{j}\left( f_{j}\right) Z_{i}\left( f_{i}\right)  \notag \\
&&+\left( n-2\right) \dfrac{1}{f_{i}^{3}}g_{i}\left( X_{i},Z_{i}\right)
\nabla ^{j}f_{j}\left( f_{j}\right) Y_{i}\left( f_{i}\right) .  \label{L2}
\end{eqnarray}%
By exchanging $X_{i}$ and $Y_{i}$ in the last equation and substitution in
Equation (\ref{L1}), one gets the deviation tensor. For Einstein-like
manifolds of class $\mathcal{B}$, the deviation tensor vanishes from which
the result hold.
\end{proof}

It is easy to retrieve a similar result on a singly warped product manifold.

\begin{corollary}
In a singly warped product manifold $M=M_{1}\times _{f_{1}}M_{2}$ where $M$
is of class type $\mathcal{B}$, $\left( M_{1},g_{1}\right) $ is an
Einstein-like manifold of class $\mathcal{B}$ if and only if%
\begin{equation*}
\left( D_{X_{1}}^{1}\mathcal{F}^{1}\right) \left( Y_{1},Z_{1}\right) =\left(
D_{Y_{1}}^{1}\mathcal{F}^{1}\right) \left( X_{1},Z_{1}\right) ,
\end{equation*}%
where $X_{1},Y_{1},Z_{1}\in \mathfrak{X}\left( M_{1}\right) $. In addition, $%
\left( M_{2},g_{2}\right) $ is Einstein-like of class type $\mathcal{B}$.
\end{corollary}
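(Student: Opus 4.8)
The plan is to deduce the statement directly from Theorem \ref{TH2} by specializing to the singly warped setting, in which one of the warping functions is the constant $1$. By the convention fixed in Section 2, writing $M=M_{1}\times _{f_{1}}M_{2}$ means $f_{2}\equiv 1$; consequently $\nabla ^{2}f_{2}=0$, $\bigtriangleup ^{2}f_{2}=0$, and hence $f_{2}^{\diamond }=f_{2}\bigtriangleup ^{2}f_{2}+(n_{1}-1)g_{2}(\nabla ^{2}f_{2},\nabla ^{2}f_{2})=0$. These three elementary vanishings are the only facts I will need.

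First I would treat the base $M_{1}$ by setting $i=1$, $j=2$ in the conclusion of Theorem \ref{TH2}. Every correction term on the right-hand side there is a scalar multiple of the bracket $2f_{2}^{\diamond }-(n-2)(\nabla ^{2}f_{2})f_{2}$, which equals zero since both $f_{2}^{\diamond }$ and $\nabla ^{2}f_{2}$ vanish. What survives is precisely the asserted Codazzi identity $(D_{X_{1}}^{1}\mathcal{F}^{1})(Y_{1},Z_{1})=(D_{Y_{1}}^{1}\mathcal{F}^{1})(X_{1},Z_{1})$, and since Theorem \ref{TH2} is an equivalence, the biconditional ``$M_{1}$ is of class $\mathcal{B}$ iff this holds'' is inherited verbatim.

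Next I would treat the fiber $M_{2}$ by setting $i=2$, $j=1$. The key observation is that $\mathcal{F}^{2}(X_{2},Y_{2})=\frac{n_{1}}{f_{2}}H^{f_{2}}(X_{2},Y_{2})$ is built from the Hessian of $f_{2}$; since $f_{2}$ is constant, $H^{f_{2}}=0$, so $\mathcal{F}^{2}\equiv 0$ and therefore $D^{2}\mathcal{F}^{2}\equiv 0$. Moreover the correction terms for $i=2$ carry the factors $X_{2}(f_{2})$ and $Y_{2}(f_{2})$, both of which vanish. Hence the condition of Theorem \ref{TH2} collapses to the trivial identity $0=0$, so it is satisfied unconditionally and $(M_{2},g_{2})$ is automatically Einstein-like of class $\mathcal{B}$.

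There is no serious analytic obstacle here, as the corollary is just a degeneration of Theorem \ref{TH2}; the only points demanding care are purely organizational, namely keeping the asymmetric roles of the two indices straight (the base inherits the surviving condition, while the fiber is unconstrained) and recording the elementary facts that a constant function has vanishing gradient, Laplacian, and Hessian, which is exactly what forces every warped correction term to drop out.
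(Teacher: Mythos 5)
Your proposal is correct and follows exactly the route the paper intends: the corollary is stated as an immediate specialization of Theorem \ref{TH2} with $f_{2}\equiv 1$, so that $\nabla ^{2}f_{2}$, $\bigtriangleup ^{2}f_{2}$, $f_{2}^{\diamond }$ and $H^{f_{2}}$ (hence $\mathcal{F}^{2}$) all vanish, leaving the bare Codazzi condition on $\mathcal{F}^{1}$ for the base and the trivial identity $0=0$ for the fiber. Your handling of the two index cases, including the observation that the fiber condition holds unconditionally so $\left( M_{2},g_{2}\right) $ is automatically of class $\mathcal{B}$, matches the paper's (unwritten but clearly intended) argument.
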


\subsection{Class $\mathcal{P}$}

Let $M$ be an Einstein-like doubly warped product manifold of class $%
\mathcal{P}$. Thus, $M$ has a parallel Ricci tensor, that is,%
\begin{equation*}
\left( D_{X}\mathrm{Ric}\right) \left( Y,Z\right) =0.
\end{equation*}%
Manifolds in this class are usually called Ricci symmetric.

\begin{theorem}
\label{TH3}In a doubly warped product manifold $M=_{f_{2}}M_{1}\times
_{f_{1}}M_{2}$ where $M$ is of class type $\mathcal{P}$, $\left(
M_{i},g_{i}\right) $ is an Einstein-like manifold of class $\mathcal{P}$ if
and only if%
\begin{eqnarray*}
\left( D_{X_{i}}^{i}\mathcal{F}^{i}\right) \left( Y_{i},Z_{i}\right) &=&%
\dfrac{n-2}{f_{i}^{3}}\left[ g_{i}\left( X_{i},Y_{i}\right) Z_{i}\left(
f_{i}\right) +g_{i}\left( X_{i},Z_{i}\right) Y_{i}\left( f_{i}\right) \right]
\left( \nabla ^{j}f_{j}\right) f_{j} \\
&&+2\frac{f_{j}^{\diamond }}{f_{i}^{3}}X_{i}\left( f_{i}\right) g_{i}\left(
Y_{i},Z_{i}\right) ,
\end{eqnarray*}%
where $i,j=1,2,i\neq j$ and $X_{i},Y_{i},Z_{i}\in \mathfrak{X}\left(
M_{i}\right) $.
\end{theorem}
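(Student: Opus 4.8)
The plan is to exploit the fact that the parallel condition defining class $\mathcal{P}$, namely $\left( D_{X}\mathrm{Ric}\right) \left( Y,Z\right) =0$ for all $X,Y,Z\in \mathfrak{X}\left( M\right) $, is strictly stronger than both the cyclic-parallel condition of class $\mathcal{A}$ and the Codazzi condition of class $\mathcal{B}$. Indeed, a symmetric $\left( 0,2\right) $ tensor that is simultaneously Codazzi and cyclic parallel has totally symmetric covariant derivative, whence the vanishing of the cyclic sum forces the derivative itself to vanish; thus $\mathcal{P}=\mathcal{A}\cap \mathcal{B}$. I therefore expect Theorem \ref{TH3} to follow either by intersecting Theorems \ref{TH1} and \ref{TH2}, or, more efficiently, by recycling the single expansion already established in the proof of Theorem \ref{TH2}.

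First I would specialize the hypothesis $\left( D_{X}\mathrm{Ric}\right) \left( Y,Z\right) =0$ to arguments lying on a common factor, setting $X=X_{i}$, $Y=Y_{i}$, $Z=Z_{i}\in \mathfrak{X}\left( M_{i}\right) $. The term $\left( D_{X_{i}}\mathrm{Ric}\right) \left( Y_{i},Z_{i}\right) $ has already been expanded in Equation (\ref{L2}) as the sum of $\left( D_{X_{i}}^{i}\mathrm{Ric}^{i}\right) \left( Y_{i},Z_{i}\right) -\left( D_{X_{i}}^{i}\mathcal{F}^{i}\right) \left( Y_{i},Z_{i}\right) $ and three explicit warping-function terms. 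Because $M$ is of class $\mathcal{P}$, the left-hand side of (\ref{L2}) vanishes identically, so the entire right-hand side must equal zero.

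The decisive step is then purely algebraic. The only term in (\ref{L2}) involving a derivative of the factor Ricci tensor is $\left( D_{X_{i}}^{i}\mathrm{Ric}^{i}\right) \left( Y_{i},Z_{i}\right) $, so this term vanishes, that is, $\left( M_{i},g_{i}\right) $ is itself of class $\mathcal{P}$, exactly when the remaining terms cancel. Moving the warping-function terms across and isolating $\left( D_{X_{i}}^{i}\mathcal{F}^{i}\right) \left( Y_{i},Z_{i}\right) $ produces the asserted identity: the $f_{j}^{\diamond }$ contribution pairs with $2X_{i}\left( f_{i}\right) g_{i}\left( Y_{i},Z_{i}\right) /f_{i}^{3}$, while the $\left( n-2\right) $ contribution assembles the symmetric combination $g_{i}\left( X_{i},Y_{i}\right) Z_{i}\left( f_{i}\right) +g_{i}\left( X_{i},Z_{i}\right) Y_{i}\left( f_{i}\right) $ multiplied by $\left( \nabla ^{j}f_{j}\right) f_{j}/f_{i}^{3}$. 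Both implications of the stated equivalence are then read off from this one equation.

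I anticipate no substantive obstacle, since the laborious tensorial computation was already discharged when (\ref{L2}) was derived. The only matters demanding attention are confirming that the same-factor block is the relevant one, the statement quantifies solely over $X_{i},Y_{i},Z_{i}\in \mathfrak{X}\left( M_{i}\right) $, so the mixed blocks, which are automatically annihilated by the standing assumption that $M$ lies in $\mathcal{P}$, contribute no further constraint, and tracking the signs and coefficients faithfully through the rearrangement. As an independent check I would verify that the polarization $Y_{i}=Z_{i}=X_{i}$ recovers precisely the class $\mathcal{A}$ identity of Theorem \ref{TH1}, and that interchanging $X_{i}$ with $Y_{i}$ and subtracting reproduces the class $\mathcal{B}$ identity of Theorem \ref{TH2}, consistent with $\mathcal{P}=\mathcal{A}\cap \mathcal{B}$.
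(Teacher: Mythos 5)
Your proposal is correct and takes essentially the same route as the paper: the authors likewise recycle the expansion in Equation (\ref{L2}) from the proof of Theorem \ref{TH2}, set $\left( D_{X_{i}}\mathrm{Ric}\right) \left( Y_{i},Z_{i}\right) =0$ by the class $\mathcal{P}$ hypothesis, and isolate $\left( D_{X_{i}}^{i}\mathrm{Ric}^{i}\right) \left( Y_{i},Z_{i}\right) $ so that its vanishing is equivalent to the stated identity for $\mathcal{F}^{i}$. Your consistency checks (polarization recovering Theorem \ref{TH1}, antisymmetrization recovering Theorem \ref{TH2}, consistent with $\mathcal{P}=\mathcal{A}\cap \mathcal{B}$) are sound extras not present in the paper.
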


\begin{proof}
Let $M=_{f_{2}}M_{1}\times _{f_{1}}M_{2}$ be a Ricci symmetric doubly warped
product manifold, that is,%
\begin{equation*}
0=\left( D_{X}\mathrm{Ric}\right) \left( Y,Z\right)
\end{equation*}%
Equation (\ref{L2}) infers%
\begin{eqnarray*}
\left( D_{X_{i}}\mathrm{Ric}\right) \left( Y_{i},Z_{i}\right) &=&\left(
D_{X_{i}}^{i}\mathrm{Ric}^{i}\right) \left( Y_{i},Z_{i}\right) -\left(
D_{X_{i}}^{i}\mathcal{F}^{i}\right) \left( Y_{i},Z_{i}\right) +2\frac{%
f_{j}^{\diamond }}{f_{i}^{3}}X_{i}\left( f_{i}\right) g_{i}\left(
Y_{i},Z_{i}\right) \\
&&+\left( n-2\right) \dfrac{1}{f_{i}^{3}}g_{i}\left( X_{i},Y_{i}\right)
\nabla ^{j}f_{j}\left( f_{j}\right) Z_{i}\left( f_{i}\right) \\
&&+\left( n-2\right) \dfrac{1}{f_{i}^{3}}g_{i}\left( X_{i},Z_{i}\right)
\nabla ^{j}f_{j}\left( f_{j}\right) Y_{i}\left( f_{i}\right) .
\end{eqnarray*}%
Thus, having a parallel Ricci tensor implies%
\begin{eqnarray*}
\left( D_{X_{i}}^{i}\mathrm{Ric}^{i}\right) \left( Y_{i},Z_{i}\right)
&=&\left( D_{X_{i}}^{i}\mathcal{F}^{i}\right) \left( Y_{i},Z_{i}\right) -2%
\frac{f_{j}^{\diamond }}{f_{i}^{3}}X_{i}\left( f_{i}\right) g_{i}\left(
Y_{i},Z_{i}\right) \\
&&-\dfrac{n-2}{f_{i}^{3}}\left[ g_{i}\left( X_{i},Y_{i}\right) Z_{i}\left(
f_{i}\right) +g_{i}\left( X_{i},Z_{i}\right) Y_{i}\left( f_{i}\right) \right]
\nabla ^{j}f_{j}\left( f_{j}\right) .
\end{eqnarray*}%
This equation completes the proof.
\end{proof}

The corresponding result on singly warped product manifolds is as follows.

\begin{corollary}
In a singly warped product manifold $M=M_{1}\times _{f_{1}}M_{2}$ where $M$
is of class type $\mathcal{P}$. Then $\left( M_{1},g_{1}\right) $ is an
Einstein-like manifold of class $\mathcal{P}$ if and only if%
\begin{equation*}
\left( D_{X_{1}}^{1}\mathcal{F}^{1}\right) \left( Y_{1},Z_{1}\right) =0,
\end{equation*}%
where $X_{1},Y_{1},Z_{1}\in \mathfrak{X}\left( M_{1}\right) $. Also, $\left(
M_{2},g_{2}\right) $ is Einstein-like of class type $\mathcal{P}$.
\end{corollary}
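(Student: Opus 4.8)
The plan is to obtain this corollary as the specialization of Theorem \ref{TH3} to the case $f_{2}\equiv 1$, which is exactly the definition of a singly warped product $M=M_{1}\times _{f_{1}}M_{2}$ recorded in Section 2. The entire argument rests on noting how the auxiliary quantities $\mathcal{F}^{i}$, $f_{j}^{\diamond }$ and $\nabla ^{j}f_{j}$ degenerate once one warping function is constant, so no computation beyond Theorem \ref{TH3} is required.

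First I would record the two elementary consequences of $f_{2}\equiv 1$ that follow directly from the Notation. Since $f_{2}$ is constant, its gradient and Laplacian vanish, $\nabla ^{2}f_{2}=0$ and $\bigtriangleup ^{2}f_{2}=0$, so that $f_{2}^{\diamond }=f_{2}\bigtriangleup ^{2}f_{2}+\left( n_{1}-1\right) g_{2}\left( \nabla ^{2}f_{2},\nabla ^{2}f_{2}\right) =0$. Likewise the Hessian $H^{f_{2}}$ vanishes, whence $\mathcal{F}^{2}\left( X_{2},Y_{2}\right) =\frac{n_{1}}{f_{2}}H^{f_{2}}\left( X_{2},Y_{2}\right) =0$ identically, and therefore $D_{X_{2}}^{2}\mathcal{F}^{2}=0$.

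Next I would read off the two factor criteria from Theorem \ref{TH3}. For the base, take $i=1$, $j=2$: the first term on the right-hand side carries the factor $\left( \nabla ^{2}f_{2}\right) f_{2}=0$ and the second carries $f_{2}^{\diamond }=0$, so the whole right-hand side vanishes and the stated criterion collapses to $\left( D_{X_{1}}^{1}\mathcal{F}^{1}\right) \left( Y_{1},Z_{1}\right) =0$. For the fiber, take $i=2$, $j=1$: the left-hand side equals $\left( D_{X_{2}}^{2}\mathcal{F}^{2}\right) \left( Y_{2},Z_{2}\right) =0$ by the previous step, while on the right-hand side every surviving term is multiplied by one of $X_{2}\left( f_{2}\right) $, $Y_{2}\left( f_{2}\right) $, $Z_{2}\left( f_{2}\right) $, all of which vanish because $f_{2}$ is constant. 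Hence the criterion of Theorem \ref{TH3} for $i=2$ reduces to the trivial identity $0=0$, so $D_{X_{2}}^{2}\mathrm{Ric}^{2}=0$ holds under no hypothesis beyond $M$ being of class $\mathcal{P}$, and $\left( M_{2},g_{2}\right) $ is automatically Einstein-like of class $\mathcal{P}$.

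Since the statement is a direct substitution into Theorem \ref{TH3}, I expect no genuine obstacle; the only point to verify carefully is that $f_{2}^{\diamond }$ and $\mathcal{F}^{2}$ truly vanish when $f_{2}\equiv 1$, which is immediate from their definitions. The asymmetry between the two factors—an honest condition on $M_{1}$ but an automatic one on $M_{2}$—merely reflects that in a singly warped product only the base carries a nontrivial warping contribution.
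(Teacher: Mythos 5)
Your proposal is correct and matches the paper's intended argument: the corollary is stated immediately after Theorem \ref{TH3} precisely as its specialization to $f_{2}\equiv 1$, and your verification that $f_{2}^{\diamond }=0$, $\mathcal{F}^{2}=0$ and $X_{2}\left( f_{2}\right) =Y_{2}\left( f_{2}\right) =Z_{2}\left( f_{2}\right) =0$ kills the right-hand side for $i=1$ and trivializes the condition for $i=2$ is exactly what is needed. The observation that the fiber $M_{2}$ inherits class $\mathcal{P}$ unconditionally, since the $i=2$ criterion reduces to $0=0$ and hence $D_{X_{2}}^{2}\mathrm{Ric}^{2}=0$ follows outright, is also the right reading of the theorem.
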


\subsection{Class $\mathcal{I\oplus B}$}

A doubly warped product manifold $M$ is of class type $\mathcal{I\oplus B}$
if its Ricci tensor satisfies%
\begin{equation*}
\nabla _{\gamma }\left[ R_{\alpha \beta }-\frac{R}{2\left( n-1\right) }%
g_{\alpha \beta }\right] =\nabla _{\alpha }\left[ R_{\gamma \beta }-\frac{R}{%
2\left( n-1\right) }g_{\gamma \beta }\right] ,
\end{equation*}%
that is, the tensor $\mathcal{H}_{\alpha \beta }=R_{\alpha \beta }-\frac{R}{%
2\left( n-1\right) }g_{\alpha \beta }$ is a Codazzi tensor. This condition
is equivalent to%
\begin{equation*}
\nabla _{\varepsilon }\mathcal{C}_{\alpha \beta \gamma }^{\text{ \ \ }%
\varepsilon }=0,
\end{equation*}%
where $\mathcal{C}$ is the Weyl conformal curvature tensor and $n\geq 3$,
i.e., $M$ has a harmonic Weyl tensor. Let $g_{\beta \gamma }=\varphi ^{2}%
\bar{g}_{\beta \gamma }$ be a conformal change of on a manifold $M$. It is
well known that the Weyl tensor $\mathcal{C}_{\alpha \beta \gamma }^{\text{
\ \ }\varepsilon }$ remains invariant, that is, $\mathcal{\bar{C}}_{\alpha
\beta \gamma }^{\text{ \ \ }\varepsilon }=\mathcal{C}_{\alpha \beta \gamma
}^{\text{ \ \ }\varepsilon }$ however $\mathcal{C}_{\alpha \beta \gamma
\varepsilon }=\varphi ^{2}\mathcal{\bar{C}}_{\alpha \beta \gamma \varepsilon
}$. The divergence of the Weyl tensor is given by\cite{Besse:2008}%
\begin{equation}
\nabla _{\varepsilon }\mathcal{C}_{\alpha \beta \gamma }^{\text{ \ \ }%
\varepsilon }=\bar{\nabla}_{\varepsilon }\mathcal{\bar{C}}_{\alpha \beta
\gamma }^{\text{ \ \ }\varepsilon }-\frac{n-3}{\varphi }\left( \nabla
_{\varepsilon }\varphi \right) \mathcal{\bar{C}}_{\alpha \beta \gamma }^{%
\text{ \ \ }\varepsilon }.  \label{C02}
\end{equation}%
The doubly warped product metric may be rewritten as follows%
\begin{eqnarray*}
g &=&f_{1}^{2}f_{2}^{2}\left( f_{1}^{-2}g_{1}+f_{2}^{-2}g_{2}\right) \\
&=&f_{1}^{2}f_{2}^{2}\left( \bar{g}_{1}+\bar{g}_{2}\right) \\
&=&f_{1}^{2}f_{2}^{2}\bar{g}
\end{eqnarray*}%
where $g_{i}=f_{i}^{2}\bar{g}_{i}$ and $\bar{g}=\bar{g}_{1}+\bar{g}_{2}$.
The doubly warped product manifold $\left( M,g\right) $ has harmonic Weyl
tensor if and only%
\begin{equation}
\bar{\nabla}_{\varepsilon }\mathcal{\bar{C}}_{\alpha \beta \gamma }^{\text{
\ \ }\varepsilon }=\frac{n-3}{\varphi }\left( \nabla _{\varepsilon }\varphi
\right) \mathcal{C}_{\alpha \beta \gamma }^{\text{ \ \ }\varepsilon }
\label{C03}
\end{equation}%
where $\varphi =f_{1}f_{2}$. Assume that $\nabla _{\varepsilon }\left(
f_{1}f_{2}\right) \mathcal{C}_{\alpha \beta \gamma }^{\text{ \ \ }%
\varepsilon }=0$, then%
\begin{equation}
\bar{\nabla}_{\varepsilon }\mathcal{\bar{C}}_{\alpha \beta \gamma }^{\text{
\ \ }\varepsilon }=0.  \label{C01}
\end{equation}%
having a harmonic Weyl tensor is equivalent to the condition%
\begin{eqnarray*}
0 &=&\mathcal{\bar{T}}_{\alpha \beta \gamma } \\
&=&\bar{\nabla}_{\gamma }\bar{R}_{\alpha \beta }-\bar{\nabla}_{\gamma }\bar{R%
}_{\alpha \beta }-\frac{1}{2\left( n-1\right) }\left[ \left( \bar{\nabla}%
_{\gamma }\bar{R}\right) \bar{g}_{\alpha \beta }-\left( \bar{\nabla}_{\gamma
}\bar{R}\right) \bar{g}_{\alpha \beta }\right] ,
\end{eqnarray*}%
where $\mathcal{\bar{T}}$ is the Cotton tensor. The metric $\bar{g}$ splits
as $\bar{g}=\bar{g}_{1}+\bar{g}_{2}$ and consequently the divergence of the
Cotton tensor $\mathcal{\bar{T}}$ splits on the factor manifolds $\left(
M_{i},\bar{g}_{i}\right) $ as%
\begin{equation}
0=\mathcal{\bar{T}}_{\alpha \beta \gamma }^{i}+\frac{n_{2}}{2\left(
n-1\right) \left( n_{1}-1\right) }\left[ \left( \bar{\nabla}_{\gamma }^{i}%
\bar{R}^{i}\right) \left( \bar{g}_{i}\right) _{\alpha \beta }-\left( \bar{%
\nabla}_{\gamma }^{i}\bar{R}^{i}\right) \left( \bar{g}_{i}\right) _{\alpha
\beta }\right] .  \label{C04}
\end{equation}%
In this case, $\left( \bar{\nabla}_{\gamma }^{i}\bar{R}^{i}\right) \left( 
\bar{g}_{i}\right) _{\alpha \beta }-\left( \bar{\nabla}_{\gamma }^{i}\bar{R}%
^{i}\right) \left( \bar{g}_{i}\right) _{\alpha \beta }=0$, that is, $\bar{R}%
^{i}$ is constant if and only if the cotton tensor $\mathcal{\bar{T}}^{i}$
on the doubly warped factor manifolds $\left( M^{i},\bar{g}_{i}\right) $
vanishes i.e. 
\begin{equation*}
\bar{\nabla}_{\varepsilon }\mathcal{\bar{C}}_{\alpha \beta \gamma }^{i\text{
\ \ }\varepsilon }=0.
\end{equation*}%
The Weyl tensors $\mathcal{C}^{i}$ on doubly warped product factor manifolds 
$\left( M_{i},g_{i}\right) $ satisfy%
\begin{eqnarray}
0 &=&\bar{\nabla}_{\varepsilon }\mathcal{\bar{C}}_{\alpha \beta \gamma }^{i%
\text{ \ \ }\varepsilon }  \notag \\
&=&\nabla _{\varepsilon }\mathcal{C}_{\alpha \beta \gamma }^{i\text{ \ \ }%
\varepsilon }+\frac{n_{i}-3}{f_{i}}\left( \nabla _{\varepsilon
}^{i}f_{i}\right) \mathcal{C}_{\alpha \beta \gamma }^{i\text{ \ \ }%
\varepsilon }.  \label{C05}
\end{eqnarray}%
It is time now to write the following result.

\begin{theorem}
In a doubly warped product manifold $M=_{f_{2}}M_{1}\times _{f_{1}}M_{2}$
where $M$ is of class type $\mathcal{I\oplus B}$. Assume that $\nabla
_{\varepsilon }\left( f_{1}f_{2}\right) \mathcal{C}_{\alpha \beta \gamma }^{%
\text{ \ \ }\varepsilon }=0$ and the conformal change $\left(
M_{i},f_{i}^{-2}g_{i}\right) $ has a constant scalar curvature. Then $\left(
M_{i},g_{i}\right) $ is an Einstein-like manifold of class $\mathcal{I\oplus
B}$ if and only if $\left( \nabla _{\varepsilon }^{i}f_{i}\right) \mathcal{C}%
_{\alpha \beta \gamma }^{i\text{ \ \ }\varepsilon }=0$ for each $i=1,2$.
\end{theorem}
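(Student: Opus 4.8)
The plan is to read the equivalence off the chain of identities already assembled before the statement, organising them as a logical cascade rather than re-deriving each piece. First I would record the starting point: since $M$ is of class $\mathcal{I\oplus B}$ it carries a harmonic Weyl tensor, $\nabla_{\varepsilon}\mathcal{C}_{\alpha\beta\gamma}^{\text{ \ \ }\varepsilon}=0$. Feeding the hypothesis $\nabla_{\varepsilon}(f_{1}f_{2})\mathcal{C}_{\alpha\beta\gamma}^{\text{ \ \ }\varepsilon}=0$ into the conformal transformation law (\ref{C03}) collapses its right-hand side and yields (\ref{C01}), so the product metric $\bar{g}=\bar{g}_{1}+\bar{g}_{2}$ itself has a harmonic Weyl tensor, $\bar{\nabla}_{\varepsilon}\bar{\mathcal{C}}_{\alpha\beta\gamma}^{\text{ \ \ }\varepsilon}=0$.

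Next I would exploit that $\bar{g}$ is a genuine (unwarped) Riemannian product, so $\bar{\nabla}_{\varepsilon}\bar{\mathcal{C}}_{\alpha\beta\gamma}^{\text{ \ \ }\varepsilon}=0$ translates into the vanishing of the Cotton tensor $\bar{\mathcal{T}}$, and on a direct product $\bar{\mathcal{T}}$ decomposes as in (\ref{C04}): its component tangent to $M_{i}$ equals the intrinsic Cotton tensor $\bar{\mathcal{T}}^{i}$ plus a cross term built from $\bar{\nabla}_{\gamma}^{i}\bar{R}^{i}$ and $\bar{g}_{i}$. The second hypothesis is precisely that $(M_{i},f_{i}^{-2}g_{i})=(M_{i},\bar{g}_{i})$ has constant scalar curvature, so $\bar{\nabla}_{\gamma}^{i}\bar{R}^{i}=0$ and the cross term drops out. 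Hence $\bar{\mathcal{T}}=0$ forces $\bar{\mathcal{T}}^{i}=0$, i.e. each factor $(M_{i},\bar{g}_{i})$ has harmonic Weyl tensor, $\bar{\nabla}_{\varepsilon}\bar{\mathcal{C}}_{\alpha\beta\gamma}^{i\text{ \ \ }\varepsilon}=0$.

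The final step is purely the single-factor conformal law (\ref{C05}). Since $g_{i}=f_{i}^{2}\bar{g}_{i}$, that identity reads $0=\nabla_{\varepsilon}\mathcal{C}_{\alpha\beta\gamma}^{i\text{ \ \ }\varepsilon}+\frac{n_{i}-3}{f_{i}}(\nabla_{\varepsilon}^{i}f_{i})\mathcal{C}_{\alpha\beta\gamma}^{i\text{ \ \ }\varepsilon}$, whence
\begin{equation*}
\nabla_{\varepsilon}\mathcal{C}_{\alpha\beta\gamma}^{i\text{ \ \ }\varepsilon}=-\frac{n_{i}-3}{f_{i}}(\nabla_{\varepsilon}^{i}f_{i})\mathcal{C}_{\alpha\beta\gamma}^{i\text{ \ \ }\varepsilon}.
\end{equation*}
By definition $(M_{i},g_{i})$ is of class $\mathcal{I\oplus B}$ exactly when the left-hand side vanishes, and the displayed identity shows this is equivalent to $(n_{i}-3)(\nabla_{\varepsilon}^{i}f_{i})\mathcal{C}_{\alpha\beta\gamma}^{i\text{ \ \ }\varepsilon}=0$. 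For $n_{i}\geq 4$ I may cancel the nonzero factor to reach the stated condition $(\nabla_{\varepsilon}^{i}f_{i})\mathcal{C}_{\alpha\beta\gamma}^{i\text{ \ \ }\varepsilon}=0$, while for $n_{i}=3$ the Weyl tensor of the factor vanishes identically and both conditions hold trivially, so the equivalence persists for each $i=1,2$.

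I expect the genuine content to sit in the middle paragraph. The passage from harmonic Weyl of $\bar{g}$ to harmonic Weyl of each $\bar{g}_{i}$ rests on the product decomposition (\ref{C04}) of the Cotton tensor together with the constant-scalar-curvature hypothesis, and this is where I would be most careful: I must check that the cross term in (\ref{C04}) is the \emph{only} obstruction, that the tangential and mixed components of $\bar{\mathcal{T}}$ separate cleanly on the product, and that the equivalence between harmonic Weyl and vanishing Cotton is invoked in the correct dimension range. The two flanking paragraphs are then essentially bookkeeping of the conformal-rescaling identities that the preceding discussion has already supplied.
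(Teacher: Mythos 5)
Your proposal is correct and is essentially the paper's own argument reorganized: the paper likewise passes from the hypothesis via (\ref{C03}) to (\ref{C01}), uses the splitting (\ref{C04}) of the Cotton tensor on the unwarped product $\bar{g}=\bar{g}_{1}+\bar{g}_{2}$ together with the constancy of $\bar{R}^{i}$ to conclude $\mathcal{\bar{T}}^{i}=0$, i.e. $\bar{\nabla}_{\varepsilon }\mathcal{\bar{C}}_{\alpha \beta \gamma }^{i\text{ \ \ }\varepsilon }=0$, and then reads the stated equivalence off the single-factor conformal identity (\ref{C05}). One caution on your $n_{i}=3$ aside (a case the paper silently ignores): there $\left( \nabla _{\varepsilon }^{i}f_{i}\right) \mathcal{C}_{\alpha \beta \gamma }^{i\text{ \ \ }\varepsilon }=0$ is indeed automatic, but membership of $\left( M_{i},g_{i}\right) $ in $\mathcal{I\oplus B}$ is \emph{not} trivial, since in dimension $3$ the defining Codazzi condition on $\mathcal{H}$ is equivalent to the vanishing of the Cotton tensor rather than to the vacuous harmonic-Weyl condition; the equivalence still holds there, but via $\mathcal{\bar{T}}^{i}=0$ and the conformal invariance of the Cotton tensor in dimension $3$, not because ``both conditions hold trivially.''
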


A. Gebarowski proved an inheritance property of this class in \cite[Theorem 2%
]{Gebarowski:1993}.

\subsection{Class $\mathcal{I\oplus A}$}

Doubly warped product manifolds where the tensor%
\begin{equation*}
\mathcal{L}=\mathrm{Ric}-\frac{2R}{n+2}g
\end{equation*}%
is Killing lies the class $\mathcal{I\oplus A}$. The above condition is
equivalent to%
\begin{equation*}
0=\left( D_{X}\mathcal{L}\right) \left( X,X\right) .
\end{equation*}%
The following theorem draw the inheritance property of this class.

\begin{theorem}
In a doubly warped product manifold $M=_{f_{2}}M_{1}\times _{f_{1}}M_{2}$
where $M$ is of class type $\mathcal{I\oplus A}$, the factor manifold $%
\left( M_{i},g_{i}\right) $ is of class type $\mathcal{I\oplus A}$ if and
only if%
\begin{eqnarray*}
\left( D_{X_{i}}^{i}\mathcal{F}^{i}\right) \left( X_{i},X_{i}\right) &=&%
\frac{2}{f_{i}^{3}}X_{i}\left( f_{i}\right) g_{i}\left( X_{i},X_{i}\right) %
\left[ f_{j}^{\diamond }+\left( n-2\right) \left( \nabla ^{j}f_{j}\right)
\left( f_{j}\right) \right] \\
&&-\frac{2}{n+2}\left( D_{X_{i}}R-\frac{n+2}{n_{i}+2}D_{X_{i}}^{i}R^{i}%
\right) g_{i}\left( X_{i},X_{i}\right) .
\end{eqnarray*}
\end{theorem}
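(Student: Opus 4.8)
The plan is to treat the class $\mathcal{I\oplus A}$ hypothesis as the class $\mathcal{A}$ condition applied to the modified tensor $\mathcal{L}=\mathrm{Ric}-\frac{2R}{n+2}g$ in place of $\mathrm{Ric}$, so that the whole argument runs parallel to the proof of Theorem \ref{TH1}. Since $M$ lies in $\mathcal{I\oplus A}$, the defining identity $\left(D_X\mathcal{L}\right)\left(X,X\right)=0$ holds for every $X\in\mathfrak{X}\left(M\right)$, and I would specialize it to a vector field $X=X_i$ tangent to a single factor $M_i$, exactly as was done for the Ricci tensor in Theorem \ref{TH1}.

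First I would expand $\left(D_{X_i}\mathcal{L}\right)\left(X_i,X_i\right)$ by linearity into a Ricci part and a scalar-curvature part. Because the Levi-Civita connection is metric, $Dg=0$, so the derivative of $\frac{2R}{n+2}g$ contributes only the term $\frac{2}{n+2}X_i\left(R\right)g\left(X_i,X_i\right)$ and nothing from differentiating $g$. This is the single place where the computation departs from Theorem \ref{TH1}, and it is also where I expect the only real bookkeeping difficulty to sit: one must carry the total scalar curvature $R$ separately from the factor scalar curvature $R^i$, and keep careful track of the metric normalization relating $g\left(X_i,X_i\right)$ to $g_i\left(X_i,X_i\right)$ in the warped factor directions.

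For the Ricci part no new calculation is needed: setting $Y_i=Z_i=X_i$ in the general identity recorded as Equation (\ref{L2}) (equivalently, reading off the computation inside the proof of Theorem \ref{TH1}) gives, with no class assumption,
\[
\left(D_{X_i}\mathrm{Ric}\right)\left(X_i,X_i\right)=\left(D_{X_i}^{i}\mathrm{Ric}^{i}\right)\left(X_i,X_i\right)-\left(D_{X_i}^{i}\mathcal{F}^{i}\right)\left(X_i,X_i\right)+\frac{2}{f_i^{3}}X_i\left(f_i\right)g_i\left(X_i,X_i\right)\left[f_j^{\diamond}+\left(n-2\right)\left(\nabla^{j}f_j\right)\left(f_j\right)\right].
\]
Substituting this together with the scalar term into $\left(D_{X_i}\mathcal{L}\right)\left(X_i,X_i\right)=0$ produces a single linear relation tying $\left(D_{X_i}^{i}\mathrm{Ric}^{i}\right)\left(X_i,X_i\right)$, $\left(D_{X_i}^{i}\mathcal{F}^{i}\right)\left(X_i,X_i\right)$, $X_i\left(R\right)$, and the warping data.

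The last step is to observe that $\left(M_i,g_i\right)$ being of class $\mathcal{I\oplus A}$ is exactly the condition $\left(D_{X_i}^{i}\mathcal{L}^{i}\right)\left(X_i,X_i\right)=0$ for $\mathcal{L}^{i}=\mathrm{Ric}^{i}-\frac{2R^{i}}{n_i+2}g_i$, which by the same metric-connection argument ($D^{i}g_i=0$) reads $\left(D_{X_i}^{i}\mathrm{Ric}^{i}\right)\left(X_i,X_i\right)=\frac{2}{n_i+2}X_i\left(R^{i}\right)g_i\left(X_i,X_i\right)$. Inserting this into the linear relation and solving for $\left(D_{X_i}^{i}\mathcal{F}^{i}\right)\left(X_i,X_i\right)$ yields the asserted formula, the combination $D_{X_i}R-\frac{n+2}{n_i+2}D_{X_i}^{i}R^{i}$ appearing precisely when the two scalar-curvature contributions $-\frac{2}{n+2}X_i\left(R\right)$ and $+\frac{2}{n_i+2}X_i\left(R^{i}\right)$ are collected and $-\frac{2}{n+2}$ is factored out. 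Since every manipulation is an equivalence resting on this one linear identity, both directions of the ``if and only if'' are obtained at once: granted that $M$ is of class $\mathcal{I\oplus A}$, the factor condition on $\mathrm{Ric}^{i}$ holds if and only if the displayed warping-function identity does.
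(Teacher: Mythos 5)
Your proposal runs essentially verbatim along the paper's own proof: specialize the Killing condition for $\mathcal{L}=\mathrm{Ric}-\frac{2R}{n+2}g$ to a factor vector field $X=X_{i}$, expand the Ricci part via Equation (\ref{L2}) with $Y_{i}=Z_{i}=X_{i}$, and then add and subtract $\frac{2}{n_{i}+2}X_{i}\left( R^{i}\right) g_{i}\left( X_{i},X_{i}\right) $ so that the factor condition $\left( D_{X_{i}}^{i}\mathcal{L}^{i}\right) \left( X_{i},X_{i}\right) =0$ is isolated and the combination $D_{X_{i}}R-\frac{n+2}{n_{i}+2}D_{X_{i}}^{i}R^{i}$ emerges, giving both directions of the equivalence from one linear identity exactly as the paper does. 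The normalization subtlety you flag, namely $g\left( X_{i},X_{i}\right) =f_{j}^{2}g_{i}\left( X_{i},X_{i}\right) $, is passed over in the paper's computation in just the same way as in your sketch, so your treatment matches the published argument.
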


\begin{proof}
Assume that $M=_{f_{2}}M_{1}\times _{f_{1}}M_{2}$ be a doubly warped product
manifold of class type $\mathcal{I\oplus A}$. Then%
\begin{eqnarray*}
0 &=&\left( D_{X}\right) \left( \mathrm{Ric}-\frac{2R}{n+2}g\right) \left(
X,X\right) \\
&=&\left( D_{X}\mathrm{Ric}\right) \left( X,X\right) -\frac{2}{n+2}g\left(
X,X\right) D_{X}R.
\end{eqnarray*}%
Using equation (\ref{L2}), it is%
\begin{eqnarray*}
0 &=&\left( D_{X_{i}}^{i}{\mathrm{Ric}}^{i}\right) \left( X_{i},X_{i}\right)
-\left( D_{X_{i}}^{i}\mathcal{F}^{i}\right) \left( X_{i},X_{i}\right) \\
&&+\frac{2}{f_{i}^{3}}X_{i}\left( f_{i}\right) g_{i}\left(
X_{i},X_{i}\right) \left[ f_{j}^{\diamond }+\left( n-2\right) \left( \nabla
^{j}f_{j}\right) \left( f_{j}\right) \right] \\
&&-\frac{2}{n+2}\left( D_{Xi}R\right) g_{i}\left( X_{i},X_{i}\right)
\end{eqnarray*}%
and consequently, one has%
\begin{eqnarray*}
0 &=&\left( D_{X_{i}}^{i}{\mathrm{Ric}}^{i}\right) \left( X_{i},X_{i}\right)
-\frac{2}{n_{i}+2}g_{i}\left( X_{i},X_{i}\right) D_{X_{i}}^{i}R^{i} \\
&&-\left( D_{X_{i}}^{i}\mathcal{F}^{i}\right) \left( X_{i},X_{i}\right) \\
&&+\frac{2}{f_{i}^{3}}X_{i}\left( f_{i}\right) g_{i}\left(
X_{i},X_{i}\right) \left[ f_{j}^{\diamond }+\left( n-2\right) \left( \nabla
^{j}f_{j}\right) \left( f_{j}\right) \right] \\
&&-\frac{2}{n+2}\left( D_{Xi}R-\frac{n+2}{n_{i}+2}D_{X_{i}}^{i}R^{i}\right)
g_{i}\left( X_{i},X_{i}\right)
\end{eqnarray*}%
which completes the proof.
\end{proof}

\subsection{Class $\mathcal{A\oplus B}$}

This class is identified by having a constant scalar curvature. Let $%
M=_{f_{2}}M_{1}\times _{f_{1}}M_{2}$ be a doubly warped product manifold of
class type $\mathcal{A\oplus B}$, that is, the scalar curvature $R$ of $M$
is constant, say $c$. The use of Equation 7 in \cite{Gebarowski:1993}
implies that $M_{i}$ is of class $\mathcal{A\oplus B}$ if there are two
constants $c_{i}$ and $c_{j}$ such that%
\begin{equation*}
\frac{c_{i}}{f_{j}^{2}}+\frac{c_{j}}{f_{i}^{2}}-\frac{n_{i}\left(
n_{i}-1\right) }{f_{j}^{2}}\Delta _{j}f_{j}-\frac{n_{j}\left( n_{j}-1\right) 
}{f_{i}^{2}}\Delta _{i}f_{i}-\frac{2n_{i}}{f_{j}}F_{j}-\frac{2n_{j}}{f_{i}}%
F_{i}=c,
\end{equation*}%
where $F_{i}=g_{i}^{\alpha \beta }\nabla _{\alpha }^{i}\nabla _{\beta
}^{i}f_{i}$.

\section{Einstein-like doubly warped Relativistic space-times}

Let $(M,g)$ be a Riemannian manifold, $f:M\rightarrow \left( 0,\infty
\right) $ and $\sigma :I\rightarrow \left( 0,\infty \right) $ are smooth
functions. The manifold $\bar{M}=_{f}I\times _{\sigma }M$ furnished with the
metric tensor $\bar{g}=-f^{2}dt^{2}\oplus \sigma ^{2}g$ is called a doubly
warped space-time. For $U,V\in \mathfrak{X}\left( M\right) ,$ the covariant
derivative $\bar{D}$ on $\bar{M}$ is given by%
\begin{eqnarray*}
\bar{D}_{\partial _{t}}\partial _{t} &=&\dfrac{f}{\sigma ^{2}}\nabla f, \\
\bar{D}_{\partial _{t}}U &=&D_{U}\partial _{t}=\frac{\dot{\sigma}}{\sigma }U+%
\frac{1}{f}U\left( f\right) \partial _{t}, \\
\bar{D}_{U}V &=&D_{U}V-\dfrac{\sigma \dot{\sigma}}{f^{2}}g\left( U,V\right)
\partial _{t},
\end{eqnarray*}%
whereas the Ricci tensor {$\mathrm{\bar{R}ic}$} on $\bar{M}$ is given by%
\begin{eqnarray*}
\mathrm{\bar{R}ic}\left( \partial _{t},\partial _{t}\right) &=&\frac{n}{%
\sigma }\ddot{\sigma}+\frac{f^{\diamond }}{\sigma ^{2}}, \\
\mathrm{\bar{R}ic}\left( U,V\right) &=&\mathrm{Ric}\left( U,V\right) -\frac{1%
}{f}H^{f}\left( U,V\right) -\frac{\sigma ^{\diamond }}{f^{2}}g\left(
U,V\right) , \\
\mathrm{\bar{R}ic}\left( \partial _{t},U\right) &=&\left( n-1\right) \frac{%
\dot{\sigma}}{\sigma }U\left( \ln f\right) .
\end{eqnarray*}%
For the definition and relativistic significance of doubly warped
space-times, the reader is referred to \cite{Elsayied:2016,Ramos:2003} and
references therein.

\begin{theorem}
In a doubly warped space-time $\bar{M}=_{f}I\times _{\sigma }M$ of class
type $\mathcal{A}$, $M$ is an Einstein-like manifold of class type $\mathcal{%
A}$ if and only if%
\begin{equation*}
\left( D_{V}\mathcal{F}\right) \left( V,V\right) =\left( \left( n-1\right) 
\dot{\sigma}^{2}+\sigma ^{\diamond }\right) \frac{2}{f^{3}}V\left( f\right)
g\left( V,V\right) .
\end{equation*}
\end{theorem}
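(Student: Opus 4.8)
The plan is to recognize the doubly warped space-time $\bar{M}={}_{f}I\times_{\sigma}M$ as a special instance of the doubly warped product $M={}_{f_{2}}M_{1}\times_{f_{1}}M_{2}$ treated in Theorem~\ref{TH1}, and then to specialize that theorem. Concretely, one takes $M_{1}=I$ (so $n_{1}=1$) and $M_{2}=M$ (so $n_{2}=n$), with warping functions $f_{2}=f$ on $M$ and $f_{1}=\sigma$ on $I$; the total dimension of $\bar{M}$ is therefore $n+1$. Under this dictionary the metric $-f^{2}dt^{2}\oplus\sigma^{2}g$ is exactly the doubly warped metric of the Preliminaries, the tensor $\mathcal{F}=\frac{1}{f}H^{f}$ is precisely $\mathcal{F}^{2}$, and the Ricci components listed for $\bar{M}$ are the specializations of the general Ricci formulas. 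In particular, since $\bar{M}$ is assumed to be of class $\mathcal{A}$, Theorem~\ref{TH1} applies verbatim.

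Next I would invoke Theorem~\ref{TH1} with the index choice $i=2$, $j=1$, so that $X_{i}=V\in\mathfrak{X}(M)$, $f_{i}=f$, $g_{i}=g$, $D^{i}=D$, and $\mathcal{F}^{i}=\mathcal{F}$, while the complementary factor contributes $f_{j}=\sigma$ and $f_{j}^{\diamond}=\sigma^{\diamond}$. The one point that needs attention is the dimension: the symbol $n$ in Theorem~\ref{TH1} denotes the total dimension of the ambient manifold, which here equals $n+1$, so the coefficient $n-2$ of Theorem~\ref{TH1} becomes $(n+1)-2=n-1$. Substituting, the right-hand side of Theorem~\ref{TH1} reads
\[
\frac{2}{f^{3}}V(f)\,g(V,V)\left[\sigma^{\diamond}+(n-1)\left(\nabla^{1}\sigma\right)(\sigma)\right].
\]

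It then remains to evaluate the base contribution $(\nabla^{1}\sigma)(\sigma)$ on the one-dimensional factor $I$. Writing $\sigma=\sigma(t)$ and using that $\nabla^{1}\sigma$ is the gradient of $\sigma$ on $I$, one has $(\nabla^{1}\sigma)(\sigma)=g_{1}(\nabla^{1}\sigma,\nabla^{1}\sigma)=\dot{\sigma}^{2}$, so that the bracket becomes $\sigma^{\diamond}+(n-1)\dot{\sigma}^{2}$, which is exactly the claimed factor. The main obstacle I anticipate is bookkeeping rather than conceptual: one must correctly match the two indices to the two factors, track the dimension shift $n\mapsto n+1$ (hence $n-2\mapsto n-1$), and evaluate the gradient term $(\nabla^{1}\sigma)(\sigma)$ on the one-dimensional timelike base with the sign conventions that reproduce $\dot{\sigma}^{2}$; once these are settled, the statement follows immediately by substitution into Theorem~\ref{TH1}.
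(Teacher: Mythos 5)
Your proposal is correct and matches the paper's own route: the paper states this space-time theorem without a separate proof, precisely as the specialization of Theorem \ref{TH1} that you carry out, with $M_{1}=I$, $M_{2}=M$, $f_{2}=f$, $f_{1}=\sigma$, $n_{1}=1$, $n_{2}=n$, total dimension $n+1$ (hence the coefficient $n-2$ becoming $n-1$), and $\mathcal{F}=\mathcal{F}^{2}=\frac{1}{f}H^{f}$. The one point you flag but do not fully resolve --- evaluating $\left( \nabla ^{1}\sigma \right) \left( \sigma \right) =\dot{\sigma}^{2}$ even though the base metric is $-dt^{2}$, under which the gradient term would formally carry a minus sign --- is exactly the (Riemannian-style) convention the paper itself adopts in the stated formula, so your substitution reproduces the paper's result faithfully.
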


\begin{theorem}
In a doubly warped space-time $\bar{M}=_{f}I\times _{\sigma }M$ of class
type $\mathcal{B}$, $M$ is an Einstein-like manifold of class type $\mathcal{%
B}$ if and only if%
\begin{eqnarray*}
\left( D_{W}\mathcal{F}\right) \left( U,V\right) &=&\left( D_{U}\mathcal{F}%
\right) \left( W,V\right) +\left( 2\sigma ^{\diamond }-\left( n-1\right) 
\dot{\sigma}^{2}\right) \frac{1}{f^{3}}W\left( f\right) g\left( U,V\right) \\
&&-\left( 2\sigma ^{\diamond }+\left( n-1\right) \dot{\sigma}^{2}\right) 
\frac{1}{f^{3}}U\left( f\right) g\left( W,V\right) .
\end{eqnarray*}
\end{theorem}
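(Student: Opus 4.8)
The plan is to recognize the assertion as the class $\mathcal{B}$ specialization of Theorem \ref{TH2}: I read the space-time $\bar{M}=_{f}I\times_{\sigma}M$ as a doubly warped product in which the Riemannian factor $M$ (dimension $n$, warped by $\sigma$) plays the role of the fiber $M_{2}$, while the timelike line $(I,-dt^{2})$ (warped by $f$) plays the role of the one-dimensional base $M_{1}$. Under this dictionary $\mathcal{F}=\frac{1}{f}H^{f}$ on $M$, the quantity $f_{j}^{\diamond}$ becomes $\sigma^{\diamond}$, and since the total dimension is $n+1$ the factor $(n-2)$ of Theorem \ref{TH2} is replaced by $(n-1)$. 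I would therefore rerun the deviation-tensor argument, paying attention to the one feature absent from the generic case: the base is one-dimensional and timelike, so the off-diagonal Ricci component is active.

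First I would expand $(D_{W}\mathrm{\bar{R}ic})(U,V)$ for $W,U,V\in\mathfrak{X}(M)$ by the product rule, inserting $\mathrm{\bar{R}ic}(U,V)=\mathrm{Ric}(U,V)-\mathcal{F}(U,V)-\frac{\sigma^{\diamond}}{f^{2}}g(U,V)$ together with the connection law $\bar{D}_{W}U=D_{W}U-\frac{\sigma\dot{\sigma}}{f^{2}}g(W,U)\partial_{t}$. The purely tangential pieces reassemble into $(D_{W}\mathrm{Ric})(U,V)-(D_{W}\mathcal{F})(U,V)$; the metric part of the $\frac{\sigma^{\diamond}}{f^{2}}g$ term cancels against $W\!\left(g(U,V)\right)$ by compatibility of $D$ with $g$, and since $\sigma^{\diamond}$ is a function of $t$ alone one is left with $\frac{2\sigma^{\diamond}}{f^{3}}W(f)g(U,V)$. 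The decisive terms come from the $\partial_{t}$-components of $\bar{D}_{W}U$ and $\bar{D}_{W}V$: contracted against the off-diagonal value $\mathrm{\bar{R}ic}(\partial_{t},V)=(n-1)\frac{\dot{\sigma}}{\sigma}V(\ln f)$, they deliver the contributions carrying $(n-1)\dot{\sigma}^{2}$.

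Next I would assemble the Codazzi deviation $(D_{W}\mathrm{\bar{R}ic})(U,V)-(D_{U}\mathrm{\bar{R}ic})(W,V)$ by interchanging $W$ and $U$ and subtracting; this vanishes because $\bar{M}$ is of class $\mathcal{B}$. The resulting identity expresses $\left[(D_{W}\mathcal{F})(U,V)-(D_{U}\mathcal{F})(W,V)\right]$ minus the warping terms as $(D_{W}\mathrm{Ric})(U,V)-(D_{U}\mathrm{Ric})(W,V)$, so the right-hand side is zero exactly when $M$ is Einstein-like of class $\mathcal{B}$; this produces the ``if and only if''. In collecting the warping terms, the part proportional to $g(W,U)V(f)$ is symmetric in $W,U$ and drops out under antisymmetrization, while the $\frac{\sigma^{\diamond}}{f^{2}}$ term and the surviving off-diagonal term recombine into the two coefficients multiplying $W(f)g(U,V)$ and $U(f)g(W,V)$. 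Solving for $(D_{W}\mathcal{F})(U,V)$ gives the displayed equation.

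The step I expect to be the true obstacle is the sign bookkeeping for this off-diagonal term, because it is precisely what separates the space-time statement from the generic one in Theorem \ref{TH2}; suppressing it would leave the two coefficients as mere negatives of one another. To reach the asserted coefficients I must track carefully (i) which argument slot each $\partial_{t}$-component occupies after antisymmetrization, and (ii) the Lorentzian signs entering $\nabla^{1}\sigma$, $\Delta^{1}\sigma$, and hence the precise relation between $\sigma^{\diamond}$ and $\dot{\sigma}^{2}$ on the timelike factor $(I,-dt^{2})$. Once those signs are fixed the remaining collection of terms is routine, and the equivalence follows.
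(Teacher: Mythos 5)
Your reduction to Theorem \ref{TH2} via the dictionary $M_{1}=I$, $n_{1}=1$, $f_{1}=\sigma$, $f_{2}=f$, $(n-2)\mapsto(n-1)$, $f_{j}^{\diamond}\mapsto\sigma^{\diamond}$ is exactly how the paper obtains its space-time theorems (they are stated there without separate proof), so your strategy coincides with the paper's implicit one. But the step you defer --- fixing the Lorentzian signs so as to ``reach the asserted coefficients'' --- cannot be carried out, because the asserted coefficients contradict the antisymmetry you yourself invoke. The deviation $(D_{W}\mathcal{F})(U,V)-(D_{U}\mathcal{F})(W,V)$ is antisymmetric under $W\leftrightarrow U$, and so is everything the computation leaves on the other side: antisymmetrizing the space-time analogue of (\ref{L2}), the $g(W,U)V(f)$ piece cancels (as you correctly note), and the $\sigma^{\diamond}$ term together with the surviving off-diagonal contribution collect into a \emph{single} multiple of the antisymmetric combination,
\begin{equation*}
\left( 2\sigma ^{\diamond }\mp \left( n-1\right) \dot{\sigma}^{2}\right) \frac{1}{f^{3}}\left[ W\left( f\right) g\left( U,V\right) -U\left( f\right) g\left( W,V\right) \right] ,
\end{equation*}
where the overall sign is settled by the signature bookkeeping: $(\nabla^{1}\sigma)(\sigma)=g_{1}(\nabla^{1}\sigma,\nabla^{1}\sigma)=-\dot{\sigma}^{2}$ for $g_{1}=-dt^{2}$ gives the $+$ sign, while the paper's listed formula $\bar{D}_{U}V=D_{U}V-\frac{\sigma\dot{\sigma}}{f^{2}}g(U,V)\partial_{t}$ (which carries a Riemannian sign, and is what makes the paper's class $\mathcal{A}$ and $\mathcal{P}$ space-time theorems come out as printed) gives the $-$ sign. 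Your expectation that the off-diagonal Ricci component $\mathrm{\bar{R}ic}(\partial_{t},V)$ splits the two coefficients is the misstep: after antisymmetrization it shifts \emph{both} coefficients by the same amount, from $2\sigma^{\diamond}$ to $2\sigma^{\diamond}\mp(n-1)\dot{\sigma}^{2}$, so they necessarily coincide; no sign choice produces the unequal pair $2\sigma^{\diamond}-(n-1)\dot{\sigma}^{2}$ and $2\sigma^{\diamond}+(n-1)\dot{\sigma}^{2}$.

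Indeed, the statement as printed is internally inconsistent: symmetrizing the displayed identity in $W,U$ annihilates the left-hand side and leaves $(n-1)\dot{\sigma}^{2}f^{-3}\left[ W(f)g(U,V)+U(f)g(W,V)\right] =0$, which (take $U=V=W$ with $U(f)\neq 0$, $g(U,U)\neq 0$) forces $\dot{\sigma}\equiv 0$ or $f$ constant, so the ``only if'' direction fails outside these degenerate cases. The theorem therefore contains a sign typo, and your deviation-tensor computation, executed honestly, would not terminate in the statement verbatim: it would produce the corrected identity with the common coefficient $2\sigma^{\diamond}+(n-1)\dot{\sigma}^{2}$ (genuine Lorentzian signs) or $2\sigma^{\diamond}-(n-1)\dot{\sigma}^{2}$ (the paper's internal conventions). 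A complete write-up must either derive and assert this corrected identity or flag the discrepancy, rather than promising that careful sign-tracking will recover the printed coefficients.
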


\begin{theorem}
In a doubly warped space-time $\bar{M}=_{f}I\times _{\sigma }M$ of class
type $\mathcal{P}$, $M$ is an Einstein-like manifold of class type $\mathcal{%
P}$ if and only if%
\begin{equation*}
\left( D_{W}\mathcal{F}\right) \left( U,V\right) =2\frac{\sigma ^{\diamond }%
}{f^{3}}W\left( f\right) g\left( U,V\right) +\dfrac{\dot{\sigma}^{2}}{f^{3}}%
\left( n-1\right) \left( g\left( W,V\right) U\left( f\right) +g\left(
W,U\right) V\left( f\right) \right) .
\end{equation*}
\end{theorem}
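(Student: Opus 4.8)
The plan is to realize the doubly warped space-time $\bar{M} = {}_{f}I\times_{\sigma}M$ as a special doubly warped product $M = {}_{f_{2}}M_{1}\times_{f_{1}}M_{2}$ and then to read off the claim as the specialization of Theorem \ref{TH3} to this configuration. Concretely, I would set $M_{1} = I$ with $g_{1} = -dt^{2}$ and $n_{1} = 1$, and $M_{2} = M$ with $g_{2} = g$ and $n_{2} = n$, so that the total dimension is $N = n_{1}+n_{2} = n+1$; the warping functions correspond to $f_{2} = f$ (a function on $M_{2}=M$) and $f_{1} = \sigma$ (a function on $M_{1}=I$). A quick check against the Ricci components recorded just before the statement confirms this dictionary: the mixed component $\mathrm{\bar{R}ic}(\partial_{t},U) = (n-1)\frac{\dot{\sigma}}{\sigma}U(\ln f)$ matches $(N-2)X_{1}(\ln f_{1})Y_{2}(\ln f_{2})$ since $N-2 = n-1$, and the tangential component $\mathrm{\bar{R}ic}(U,V)$ matches $\mathrm{Ric}^{2}-\frac{n_{1}}{f_{2}}H^{f_{2}}-\frac{f_{1}^{\diamond}}{f_{2}^{2}}g_{2}$ with $\mathcal{F} = \mathcal{F}^{2} = \frac{1}{f}H^{f}$, $\mathrm{Ric}^{2} = \mathrm{Ric}$, and $\sigma^{\diamond} = f_{1}^{\diamond}$.

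With this identification in place, I would apply Theorem \ref{TH3} to the fiber factor, that is, take $i=2$ and $j=1$. Since $\bar{M}$ is assumed to be of class $\mathcal{P}$, the hypothesis of that theorem is satisfied, and it asserts that $M = M_{2}$ is Einstein-like of class $\mathcal{P}$ if and only if
\begin{equation*}
(D_{W}\mathcal{F})(U,V) = \frac{N-2}{f^{3}}\left[ g(W,U)V(f)+g(W,V)U(f)\right](\nabla^{1}\sigma)\sigma + 2\frac{\sigma^{\diamond}}{f^{3}}W(f)g(U,V),
\end{equation*}
where $W,U,V\in\mathfrak{X}(M)$, $D$ is the Levi-Civita connection of $(M,g)$, and $N-2 = n-1$. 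It then remains only to evaluate the base-factor scalar $(\nabla^{1}\sigma)\sigma$ appearing in the coefficient.

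The one genuine computation is this last quantity on the one-dimensional base $I$. Here $\nabla^{1}\sigma$ is the $g_{1}$-gradient of $\sigma = \sigma(t)$, and $(\nabla^{1}\sigma)\sigma = g_{1}(\nabla^{1}\sigma,\nabla^{1}\sigma)$; carrying through the same conventions used for the space-time Ricci tensor (those under which $\sigma^{\diamond}=f_{1}^{\diamond}$ and $\mathrm{\bar{R}ic}(\partial_{t},\partial_{t})$ are recorded above) yields $(\nabla^{1}\sigma)\sigma = \dot{\sigma}^{2}$. Substituting this together with $N-2 = n-1$ into the displayed identity collapses it exactly to the asserted formula, and both implications follow at once since Theorem \ref{TH3} is itself an equivalence.

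The step I expect to demand the most care is precisely the treatment of the timelike factor: because $I$ carries the Lorentzian line element $-dt^{2}$, the signs produced when raising indices in $\nabla^{1}\sigma$, in the Hessian $H^{\sigma}$, and in the Laplacian hidden inside $\sigma^{\diamond}$ must be tracked consistently, and one must ensure the sign convention agrees with the one used to derive the quoted components $\mathrm{\bar{R}ic}(\partial_{t},\partial_{t})$ and $\mathrm{\bar{R}ic}(U,V)$. Once that bookkeeping is fixed, the remainder is a direct substitution into Theorem \ref{TH3}; moreover the two companion space-time theorems for classes $\mathcal{A}$ and $\mathcal{B}$ arise in the identical fashion from Theorems \ref{TH1} and \ref{TH2}, which provides a convenient internal cross-check on the chosen sign conventions for the base factor.
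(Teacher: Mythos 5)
Your overall route is exactly the one the paper intends (it states this theorem without proof, as an application of Theorem \ref{TH3}): set $M_{1}=I$, $g_{1}=-dt^{2}$, $n_{1}=1$, $M_{2}=M$, $n_{2}=n$, $f_{1}=\sigma$, $f_{2}=f$, total dimension $N=n+1$, identify $\mathcal{F}=\mathcal{F}^{2}=\frac{1}{f}H^{f}$, and specialize Theorem \ref{TH3} with $i=2$, $j=1$. Your transcription of Theorem \ref{TH3} under this dictionary is correct, and $N-2=n-1$ is right. The gap is precisely the one computation you defer to ``conventions'' instead of performing. On $(I,g_{1}=-dt^{2})$ the gradient is determined by $g_{1}(\nabla^{1}\sigma,\partial_{t})=\dot{\sigma}$ with $g_{1}(\partial_{t},\partial_{t})=-1$, so $\nabla^{1}\sigma=-\dot{\sigma}\,\partial_{t}$ and hence
\begin{equation*}
\left( \nabla^{1}\sigma\right) \left( \sigma\right) =g_{1}\left( \nabla^{1}\sigma,\nabla^{1}\sigma\right) =-\dot{\sigma}^{2},
\end{equation*}
not $+\dot{\sigma}^{2}$ (similarly $\triangle^{1}\sigma=-\ddot{\sigma}$ inside $\sigma^{\diamond}$). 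Carried out strictly in the conventions of Section 2 --- the conventions under which Theorem \ref{TH3} was actually proved --- the specialization produces the displayed identity with $-(n-1)\dot{\sigma}^{2}/f^{3}$ where the statement has $+(n-1)\dot{\sigma}^{2}/f^{3}$. So your proof, as written, assumes the answer at its only nontrivial point.

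Your proposed resolution is also circular: you cannot fix the sign by matching ``the conventions used for the space-time Ricci tensor,'' because the Section 4 formulas are not internally consistent with Section 2. For instance, Section 2 yields $\mathrm{\bar{R}ic}(\partial_{t},\partial_{t})=-\frac{n}{\sigma}\ddot{\sigma}+\frac{f^{\diamond}}{\sigma^{2}}$ (which is also the classical GRW value when $f=1$) and $\bar{D}_{U}V=D_{U}V+\frac{\sigma\dot{\sigma}}{f^{2}}g(U,V)\partial_{t}$, against the signs printed in Section 4; the printed statement of the present theorem carries the Riemannian-line sign $(\nabla^{1}\sigma)(\sigma)=+\dot{\sigma}^{2}$. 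Moreover, the internal cross-check you suggest would expose rather than confirm the choice: Theorem \ref{TH2} places the \emph{same} coefficient $2f_{j}^{\diamond}-(n-2)(\nabla^{j}f_{j})f_{j}$ on both antisymmetrized terms, whereas the stated class-$\mathcal{B}$ space-time theorem has the two distinct coefficients $2\sigma^{\diamond}-(n-1)\dot{\sigma}^{2}$ and $2\sigma^{\diamond}+(n-1)\dot{\sigma}^{2}$, which no single value of $(\nabla^{1}\sigma)(\sigma)$ can produce. To complete the proof you must do the Lorentzian bookkeeping you flag; doing so proves the theorem only after correcting the sign of the $\dot{\sigma}^{2}$ term (equivalently, it proves the statement verbatim only under the unannounced convention $g_{1}=+dt^{2}$ for the base factor).
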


\end{document}